\date{}
\begin{document}

\title{\bf Analysis of Caputo impulsive fractional order differential equations with applications}
\author{Lakshman Mahto$^{1}$, Syed Abbas$^{1}$, Angelo Favini$^{2}$ \thanks{
E-mails: favini@dm.unibo.it \, sabbas.iitk@gmail.com}
\\\\
$^{1}$School of Basic Sciences \\
Indian Institute of Technology Mandi \\
Mandi, H.P., 175001, India. \\\\
$^{2}$Dipartimento di Matematica \\
Universita di Bologna \\
Pizza di Porta S. Donato, 5, \\
40126 - Bologna, Italy
} \vskip .1in

\begin{titlepage}

\maketitle

\noindent
\begin{abstract}

We use Sadavoskii's fixed point method to investigate the existence and uniqueness of
solutions of Caputo impulsive fractional differential equations of order $\alpha
\in (0,1)$ with one example of impulsive logistic model and few other examples as well. We also discuss Caputo impulsive fractional differential equations with finite delay. The results proven are new and compliment the existing one.

\vspace{1em}

\noindent {\it Keywords: Fractional differential equation; Caputo fractional derivative;
Fixed point method}
\end{abstract}
\end{titlepage}

\maketitle \numberwithin{equation}{section}
\newtheorem{theorem}{Theorem}[section]
\newtheorem{lemma}[theorem]{Lemma}
\newtheorem{example}[theorem]{Example}
\newtheorem{proposition}[theorem]{Proposition}
\newtheorem{corollary}[theorem]{Corollary}
\newtheorem{remark}[theorem]{Remark}
\newtheorem{definition}[theorem]{Definition}

\setcounter{page}{2}

\section{Introduction}
Dynamics of many evolutionary processes from various field as population dynamics, control theory, physics, biology, medicine etc. undergo abrupt changes at certain moment of times like earthquake, harvesting, shock etc. These perturbations can be well-approximated as instantaneous change of state or impulses. These processes are modelled by {\em impulsive differential equations}. In 1960, Milman and Myshkis introduced impulsive differential equations in their paper \cite{mil}. Based on their work, several monographs have been published by many authors like Semoilenko and Perestyuk \cite{sam}, Lakshmikantham et. al. \cite{laksh}, Bainov and Semoinov \cite{bain, bain1}, Bainov and Covachev \cite{bain2} and Benchohra et. al. \cite{ben}.  All the authors mentioned above have considered impulsive differential equations as ordinary differential equations coupled with impulsive effects. They considered the impulsive effects as difference equations being satisfied at impulse times. So, the solutions are piecewise continuous with discontinuities at impulse times. In the field like biology, population dynamics, etc., problems with hereditary are best modelled by delay differential equations \cite{hale}. Problems associated with impulsive effects and hereditary property are modelled by impulsive delay differential equations.

The origin of fractional calculus (derivatives $\frac{d^{\alpha}}{dt^{\alpha}}f$ and integrals $I^\alpha f$ of
arbitrary order $\alpha>0$) goes back to Newton and Leibniz in the 17th
century. In a letter correspondence with Leibniz, l'Hospital asked "What if the order
of the derivative is $\frac{1}{2}"?$ Leibniz replied, "Thus it follows that
will be equal to $x\sqrt{dx:x},$ an apparent paradox, from which one day
useful consequences will be drawn." This letter of Leibniz was in September, 1695. So 1695 is considered as the birthday of fractional calculus.
Fractional order differential equations are
generalizations of classical integer order  differential equations
and are increasingly used to model problems in fluid dynamics, finance
and other areas of application. Recent investigations have shown
that often times physical systems can be modelled more accurately
using fractional derivative formulations \cite{mainardi}. There
are several excellent monographs available on this
field \cite{diet, kilbas, miller,pod1,saba, samko}. In
\cite{kilbas}, the authors give a recent and up-to-date description of the
developments of fractional differential and fractional
integro-differential equations including applications. The
existence and uniqueness of solutions to fractional differential equations has been considered by many authors  \cite{abbas, abbas1, ahmad,
hadid1, ibrahim, laksh1}. Impulsive fractional differential equations represent a real framework for mathematical modelling to real world problems. Significant progress has been made in the theory of impulsive fractional differential equations \cite{ben, ben1, ben2, fecken}. Xu et. al. in his paper \cite{xu} has described an impulsive delay fishing model. \\
Fractional derivatives arise naturally in mathematical problems, for $\alpha>0$ and a function $f: [0,T] \rightarrow \mathbb{R}$ recall \cite[Definitions 3.1, 2.2]{diet} the
\begin{itemize}
\item[(a)] \emph{Caputo fractional derivative}
\[
  ^{C}D^{\alpha} f(t)=\frac{1}{\Gamma (1-\alpha)}
  \int_0^t(t-s)^{-{\alpha}}f'(s)ds,
\]

\item[(b)] \emph{Riemann-Liouville fractional derivative}
\[
  D^{\alpha} f(t)=\frac{d^{\alpha}}{dt^{\alpha}}f(t)=\frac{1}{\Gamma (1-\alpha)}
  \frac{d}{dt}\int_0^t(t-s)^{-{\alpha}}f(s)ds,
\]
\end{itemize}
provided the right hand sides exist point-wise on $[0,T]$ ($\Gamma$ denotes the gamma function). Using the  \emph{Riemann-Liouville fractional integral} \cite[Definition 2.1]{diet} $I^{\alpha}_0 f(t)=\frac{1}{\Gamma (\alpha)}\int_0^t(t-s)^{\alpha-1}f(s)ds$, we have $^{C}D^{\alpha} f(t) =I^{1-\alpha}_0\frac{d}{dt}f(t)$ and
$D^{\alpha} f(t)=\frac{d}{dt}I^{1-\alpha}_0f(t)$. $I^{\alpha}_0f$ exists, for instance, for all $\alpha >0,$ if $f\in C^0([0,T])\cap L^1_{loc}([0,T])$, moreover $I^{\alpha}_0f(0) = 0$.

One can see that the both the fractional derivatives are actually non-local operator because integral is a non-local operator.
Moreover, calculating time fractional derivative of a function at some
time requires all the past history and hence fractional derivatives can be used for modelling systems with
memory. Fractional differential equations can be formulated using both Caputo or Riemann-Liouville fractional derivatives.
A \emph{Riemann-Liouville initial value problem} can be stated as follows
\begin{align}
\begin{split}
D^{\alpha}x(t) &=
f(t,x(t)), \; t \in [0,T],
\\ D^{\alpha-1}x(0) &=x_0,
\end{split}
\label{EE_1}
\end{align}
or equivalently,
$x(t)=\frac{x_0t^{\alpha-1}}{\Gamma(\alpha)}+\int_0^t(t-s)^{\alpha-1}f(s,x(s))ds$
in its integral representation \cite[Theorem 3.24]{kilbas}. For a physical interpretation of the initial conditions in \eqref{EE_1} see \cite{gian,pod,pod2}. If derivatives of Caputo type are used instead of Riemann-Liouville type then initial conditions for the corresponding Caputo fractional differential equations can be formulated as for classical ordinary differential equations, namely $x(0)=x_0$.

Our main objective is to discuss existence and uniqueness of
solutions of the following \emph{ impulsive fractional differential equation of Caputo type} in a Banach space $X$ with norm $|.|,$
\begin{align}
\begin{split}
^{C}D^{\alpha}x(t) & =
f(t,x(t)), \; t \in I=[0,T], \; t\neq t_k,
\\ \Delta x(t)|_{t=t_k} & = I_k(x(t_k^-)), k =1,2,...,m , \; \\   x(0) &=x_0.
\end{split}
\label{EE_2}
\end{align}
where $f\in C(I\times X,X), I_k:X\rightarrow X$ and $x_0\in X$.                 $0=t_0<t_1<t_2<\cdots<t_m<t_{m+1}=T, \Delta x(t)|_{t=t_k}=x(t_k^+)-x(t_K^-), x(t_k^+)=\lim_{h\rightarrow 0}x(t+h)\ \mbox{and} \ x(t_k^-)=\lim_{h\rightarrow 0}x(t-h).$ We break our function $f$ into two components which satisfy different conditions. We observed that these kind of functions occurs in ecological modelling. We have given the example of logistic equation in the last section. Our main tool is Sadovskii's fixed point theorem.

Now, we define some important spaces and norm which will encounter frequently:
\begin{align*}
PC(I,X) = \{ x:[0,T] \rightarrow X| x\in C((t_k,t_{k+1}],X), k = 0,1,2 \cdots m,  \\   x(t_k^+) \ \mbox{and} \ x(t_k^-) \ \mbox{exist}, \ x(t_k)=x(t_k^-) \}.
\end{align*} with sup-norm $\|.\|$, defined by $\|x\|=\sup \{|x(t)|: t\in I\}.$ \\ $\mathcal{C}=C([-r,0],X)$  with sup-norm $\|x\|_r=\sup \{|x(t)|: t\in [-r,0]\}.$

\begin{definition}
A solution of fractional differential equation \eqref{EE_2} is a piecewise continuous function $x\in PC([0,T],X)$ and which satisfied \eqref{EE_2}.
\end{definition}

\begin{definition}(Definition 11.1 \cite{zeid})
Kuratowskii non-compactness measure: Let $M$ be a bounded set in metric space $(X,d)$, then Kuratowskii non-compactness measure, $\mu(M)$ is defined as $\inf \{\epsilon: \ \mbox{M covered by a finite many} \\  \mbox{sets such that the diameter of each set} \ \leq \epsilon\}.$
\end{definition}

\begin{definition}(Definition 11.6 \cite{zeid})
Condensing map: Let $\Phi:X\rightarrow X$ be a bounded and continuous  operator on Banach space $X$ such that $\mu(\Phi(B))<\mu(B)$ for all bounded set $B\subset D(\Phi)$, where $\mu$ is the Kuratowskii non-compactness measure, then $\Phi$ is called condensing map.
\end{definition}

\begin{theorem} (\cite{sado})
Let B be a convex, bounded and closed subset of a Banach space $X$
and $\Phi: B \rightarrow B$ be a condensing map. Then, $\Phi$
 has a fixed point in $B.$ \label{thm1}
\end{theorem}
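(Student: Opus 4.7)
The plan is to reduce Sadovskii's theorem to Schauder's fixed point theorem by producing a compact convex $\Phi$-invariant subset of $B$. First, I would pick any point $x_0\in B$ and consider the collection $\mathcal{F}$ of all closed convex subsets $C\subseteq B$ with $x_0\in C$ and $\Phi(C)\subseteq C$. Since $B$ itself lies in $\mathcal{F}$, the collection is nonempty; set $K=\bigcap_{C\in\mathcal{F}}C$. An arbitrary intersection of closed convex sets is closed and convex, $K$ contains $x_0$, and $\Phi(K)\subseteq\Phi(C)\subseteq C$ for each $C\in\mathcal{F}$, so $\Phi(K)\subseteq K$. Hence $K\in\mathcal{F}$, and $K$ is the smallest element of $\mathcal{F}$.

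Next I would show $K$ is compact. Define $K_1=\overline{\mathrm{conv}}\bigl(\Phi(K)\cup\{x_0\}\bigr)$. Since $K$ is closed, convex and contains both $\Phi(K)$ and $x_0$, we get $K_1\subseteq K$. Conversely $K_1$ is closed, convex, contains $x_0$, and $\Phi(K_1)\subseteq\Phi(K)\subseteq K_1$, so $K_1\in\mathcal{F}$ and thus $K\subseteq K_1$; therefore $K=K_1$. Using the standard properties of the Kuratowski measure (invariance under closed convex hull, invariance under adjunction of a single point, and monotonicity), we obtain
\[
\mu(K)=\mu(K_1)=\mu\bigl(\overline{\mathrm{conv}}(\Phi(K)\cup\{x_0\})\bigr)=\mu(\Phi(K)).
\]
Because $\Phi$ is condensing on the bounded set $K\subseteq B$, one has $\mu(\Phi(K))<\mu(K)$ whenever $\mu(K)>0$. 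Combined with the previous equality this forces $\mu(K)=0$, so $K$ is totally bounded, and being closed in the Banach space $X$ it is compact.

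Finally, $K$ is a nonempty compact convex subset of $X$ and $\Phi:K\to K$ is continuous, so Schauder's fixed point theorem produces a point $x^*\in K\subseteq B$ with $\Phi(x^*)=x^*$.

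The main obstacle is the computation $\mu(K)=\mu(\Phi(K))$, which rests on the three nontrivial properties of the Kuratowski measure: $\mu(\overline{A})=\mu(A)$, $\mu(\mathrm{conv}\,A)=\mu(A)$, and $\mu(A\cup\{p\})=\mu(A)$. The first and third are elementary, but the invariance under convex hull requires a careful covering argument in a Banach space; I would either quote it from a standard reference such as the monograph of Zeidler already cited, or sketch it by partitioning a finite $\varepsilon$-cover of $A$ into small convex pieces and using the fact that the convex hull of finitely many compact convex sets in a Banach space can itself be covered by sets of controlled diameter.
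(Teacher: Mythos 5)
Your proposal is correct: it is the classical proof of Sadovskii's theorem, reducing to Schauder's theorem via the minimal closed convex invariant set $K=\overline{\mathrm{conv}}(\Phi(K)\cup\{x_0\})$ and the invariance of the Kuratowski measure under closed convex hulls. The paper itself states this result only as a citation to Sadovskii's 1967 paper and offers no proof, so there is nothing to compare against; your argument matches the standard one found in the cited references (e.g.\ Zeidler), including the correct reading of the condensing condition as applying only when $\mu(K)>0$.
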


\begin{lemma}(Example 11.7, \cite{zeid})
A map $ \Phi=\Phi_1+\Phi_2:X\rightarrow X$ is $k-$ contraction with $0\leq k< 1$ if
\begin{enumerate}
\item[(a)] $\Phi_1$ is $k-$ contraction i.e. $|\Phi_1(x)-\Phi_1(y)| \leq k|x-y|$ and
\item[(b)] $\Phi_2$ is compact,
\end{enumerate}
and hence $\Phi$ is a condensing map. \label{lem1*}
\end{lemma}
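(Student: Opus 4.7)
The plan is to reduce the claim to two standard properties of the Kuratowski measure of noncompactness $\mu$: (i) $\mu$ is subadditive with respect to the Minkowski sum, i.e.\ $\mu(A+B)\le \mu(A)+\mu(B)$, and (ii) if $\Psi:X\to X$ is $k$-Lipschitz then $\mu(\Psi(B))\le k\,\mu(B)$ for every bounded $B$. Both follow directly from the definition via covers by sets of small diameter: covers of $A$ and $B$ at scales $\varepsilon_1,\varepsilon_2$ yield a cover of $A+B$ of diameter at most $\varepsilon_1+\varepsilon_2$; and a cover of $B$ at scale $\varepsilon$ pushes forward under $\Psi$ to a cover of $\Psi(B)$ at scale $k\varepsilon$.

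With these tools in hand, let $B\subset X$ be bounded. Since $\Phi(B)\subseteq \Phi_1(B)+\Phi_2(B)$, property (i) gives
\begin{equation*}
\mu(\Phi(B))\le \mu(\Phi_1(B))+\mu(\Phi_2(B)).
\end{equation*}
By (ii) applied to the $k$-contraction $\Phi_1$, we get $\mu(\Phi_1(B))\le k\,\mu(B)$. Because $\Phi_2$ is compact, $\Phi_2(B)$ is relatively compact in $X$, hence $\mu(\Phi_2(B))=0$. Combining these bounds yields $\mu(\Phi(B))\le k\,\mu(B)$, which is the statement ``$\Phi$ is a $k$-set contraction''; and since $k<1$, this forces $\mu(\Phi(B))<\mu(B)$ whenever $\mu(B)>0$, so $\Phi$ is condensing.

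Continuity and boundedness of $\Phi$ on bounded sets, which are also required to call $\Phi$ condensing in the sense of the definition above, follow immediately: $\Phi_1$ is Lipschitz hence continuous, $\Phi_2$ is continuous (being compact in the operator sense), and boundedness of $\Phi(B)$ is inherited from boundedness of $\Phi_1(B)$ (Lipschitz image of a bounded set) and of $\Phi_2(B)$ (relatively compact, hence bounded).

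The only nontrivial step is supplying the two measure-theoretic facts (i) and (ii); once these are cited (or checked in one line from the definition of $\mu$), the conclusion is a direct computation. No fixed-point machinery or compactness argument beyond the characterisation $\mu(K)=0\iff K$ is relatively compact is required.
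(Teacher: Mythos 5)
Your proof is correct, and it is the standard argument: subadditivity of the Kuratowski measure over Minkowski sums, the $k$-set-contraction property of Lipschitz maps, and the vanishing of $\mu$ on relatively compact sets together give $\mu(\Phi(B))\le k\,\mu(B)<\mu(B)$ for $\mu(B)>0$. The paper itself supplies no proof of this lemma --- it is quoted verbatim from Zeidler (Example 11.7) --- so there is nothing to compare against; your argument is precisely the canonical one behind the cited result, and your explicit caveat that the strict inequality is only meaningful when $\mu(B)>0$ is a point the paper's own definition of ``condensing'' glosses over.
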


The structure of the paper is as follows. In section 2, we prove existence (Theorem \ref{thm2}) and uniqueness
of solutions to \eqref{EE_2}. We show in section 3, the existence and uniqueness of solutions for a general class of impulsive functional differential equations of fractional order $\alpha \in (0,1)$. In the last section 4, we give some examples in favor of our sufficient conditions.

\section{Impulsive fractional differential equation}

Consider the initial value problem \eqref{EE_2} on the cylinder $R=\{(t,x) \in \mathbb{R} \times X : t\in[0,T],x\in B(0,r)\}$ for some fixed $T>0,r>0$, and assume that there exist $p \in (0,\alpha), \alpha \in (0,1), \ M_{1}, M_{2}, L_1 \in L_{1/p}([0,T],\mathbb{R}^{+})$ and functions $f_1, f_2 \in C(R,X)$ such that $f = f_1 + f_2$ and the following assumptions are satisfied
\begin{enumerate}
\item[(A.1)] $f_1$ is bounded and Lipschitz, in particular, $|f_{1}(t,x)| \le M_{1}(t)$ and $|f_{1}(t,x)-f_{1}(t,y)| \le L_1(t)\ |x-y|$ for all
$(t,x), (t,y) \in R$,

\item[(A.2)] $f_{2}$ is compact and bounded, in particular, $|f_{2}(t,x)| \le M_{2}(t)$ for all $(t,x) \in R$,

\item[(A.3)] $I_k\in C(X,X)$ such that $|I_k(x)|\leq l_1$ and $|I_k(x)-I_k(y)|\leq l_2|x-y|.$
\end{enumerate}

\begin{lemma}(Fecken et al, Lemma 2. \cite{fecken}) The initial value problem \eqref{EE_2} is equivalent to the non-linear integral equation
\begin{eqnarray}
\ x(t) &=& x_0+ \frac{1}{\Gamma(\alpha)}\int_0^t(t-s)^{\alpha-1}f_1(s,x(s))ds \nonumber
\\ && + \frac{1}{\Gamma(\alpha)}\int_0^t(t-s)^{\alpha-1}f_2(s,x(s))ds, \; t\in [0,t_1] \nonumber
\\
&=& x_0+I_1(x(t_1^-))+\frac{1}{\Gamma(\alpha)}\int_0^t(t-s)^{\alpha-1}f_1(s,x(s))ds \nonumber \\ && +\frac{1}{\Gamma(\alpha)}\int_0^t(t-s)^{\alpha-1}f_2(s,x(s))ds, \; t\in (t_1,t_2] \nonumber \\
 &=& x_0+\sum_{k=1}^2I_k(x(t_k^-))+\frac{1}{\Gamma(\alpha)}\int_0^t(t-s)^{\alpha-1}f_1(s,x(s))ds \nonumber
\\ && +\frac{1}{\Gamma(\alpha)}\int_0^t(t-s)^{\alpha-1}f_2(s,x(s))ds,  \; t\in (t_2,t_3] \nonumber
\\
 &=& x_0+\sum_{k=1}^mI_k(x(t_k^-))+\frac{1}{\Gamma(\alpha)}\int_0^t(t-s)^{\alpha-1}f_1(s,x(s))ds \nonumber
\\ && + \frac{1}{\Gamma(\alpha)}\int_0^t(t-s)^{\alpha-1}f_2(s,x(s))ds,  \;  t\in (t_m,T]. \label{EE_3}
\end{eqnarray}
In other words, every solution of the integral equation \eqref{EE_3} is
also solution of our original initial value problem \eqref{EE_2} and conversely.
\end{lemma}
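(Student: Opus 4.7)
The plan is to establish both implications by applying the Riemann--Liouville integral $I^{\alpha}_0$ and the Caputo derivative $^{C}D^{\alpha}$ piecewise, carefully accounting for the jumps of $x$ at the impulse times. For the direction \eqref{EE_2}$\Rightarrow$\eqref{EE_3}, I would start from $^{C}D^{\alpha}x(t)=f(t,x(t))$, which by the identity $^{C}D^{\alpha}x=I^{1-\alpha}_0 x'$ recorded at the beginning of the paper reads
\[
\frac{1}{\Gamma(1-\alpha)}\int_0^t (t-s)^{-\alpha}x'(s)\,ds = f(t,x(t)).
\]
Applying $I^{\alpha}_0$ to both sides and invoking the semigroup property $I^{\alpha}_0 I^{1-\alpha}_0 = I^{1}_0$ converts the left-hand side into $\int_0^t x'(s)\,ds$. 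Because $x\in PC([0,T],X)$ is piecewise absolutely continuous with jumps at $t_1,\dots,t_m$, splitting this integral over the subintervals $(t_{k-1},t_k]$ and telescoping yields
\[
\int_0^t x'(s)\,ds = x(t) - x_0 - \sum_{t_k<t}\bigl(x(t_k^+)-x(t_k^-)\bigr) = x(t) - x_0 - \sum_{t_k<t} I_k(x(t_k^-)),
\]
after invoking $\Delta x|_{t=t_k}=I_k(x(t_k^-))$. Rearranging delivers the piecewise formula \eqref{EE_3}.

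For the converse, I would assume $x\in PC([0,T],X)$ satisfies \eqref{EE_3} and verify each requirement of \eqref{EE_2} in turn. Evaluating at $t=0$ immediately gives $x(0)=x_0$. The Riemann--Liouville convolution term in \eqref{EE_3} is continuous in $t$, while the finite sum $\sum_{t_k<t}I_k(x(t_k^-))$ jumps by precisely $I_k(x(t_k^-))$ as $t$ crosses $t_k$; hence $\Delta x|_{t=t_k}=I_k(x(t_k^-))$. On each open subinterval $(t_k,t_{k+1})$ the constant $x_0$ and the accumulated sum $\sum_{j\le k}I_j(x(t_j^-))$ are independent of $t$, so applying $^{C}D^{\alpha}$ annihilates them; the standard inverse relation $^{C}D^{\alpha}I^{\alpha}_0 g = g$ for continuous $g=f(\cdot,x(\cdot))$ then recovers $^{C}D^{\alpha}x(t)=f(t,x(t))$.

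The one technical subtlety, and the only place where the classical (non-impulsive) argument needs adjustment, is that the usual inversion $I^{\alpha}_0\,^{C}D^{\alpha}x(t)=x(t)-x(0)$ is valid only when $x$ is absolutely continuous; when $x$ is merely piecewise absolutely continuous, the telescoping picks up exactly the sum of impulse jumps, and this is precisely what produces the summation term in \eqref{EE_3}. The remainder of the argument is bookkeeping, provided $x$ is absolutely continuous on each $(t_k,t_{k+1}]$, which is a regularity built into the Caputo formulation through $^{C}D^{\alpha}x = I^{1-\alpha}_0 x'$.
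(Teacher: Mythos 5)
The paper offers no argument for this lemma at all --- it is simply quoted from Fecken et al.\ \cite{fecken} --- so there is no in-paper proof to compare against; your sketch is essentially the standard derivation from that reference, and its core is sound. Applying $I^{\alpha}_0$ to $^{C}D^{\alpha}x=I^{1-\alpha}_0x'$, using the semigroup property, and telescoping $\int_0^t x'(s)\,ds$ across the jumps correctly produces the \emph{non-restarted} form of \eqref{EE_3}, in which the single convolution $\frac{1}{\Gamma(\alpha)}\int_0^t(t-s)^{\alpha-1}f(s,x(s))\,ds$ runs over all of $[0,t]$ and only the accumulated impulses $\sum_{t_k<t}I_k(x(t_k^-))$ are added. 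Landing on this form rather than the ``restarted'' variant used in earlier literature is exactly the content of the cited lemma, so your bookkeeping of the jump terms is doing the right work.

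Two points in the converse direction are glossed over. First, you invoke $^{C}D^{\alpha}I^{\alpha}_0g=g$ ``for continuous $g$,'' but $g(s)=f(s,x(s))$ inherits the discontinuities of $x$ at $t_1,\dots,t_m$ and is only piecewise continuous; you need the $L^1$/almost-everywhere version of this inversion, or an argument on each subinterval that separately handles the (H\"older continuous) contribution of the integral over $[0,t_k]$. Second, your closing claim that the regularity of $x$ is ``built into the Caputo formulation'' inverts the logic of this direction: a function defined by the right-hand side of \eqref{EE_3} is a priori only H\"older continuous of order $\alpha$ on each piece, since $I^{\alpha}_0 g$ for merely continuous $g$ need not be differentiable, so the pointwise existence of $x'$ --- and hence of $^{C}D^{\alpha}x=I^{1-\alpha}_0x'$ as defined in the introduction --- is something to be verified, not assumed. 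The usual repair is to interpret the Caputo derivative through the Riemann--Liouville derivative of $x$ minus its piecewise-constant part, which exists under weaker hypotheses; without some such device the converse implication is not fully justified. These are standard soft spots shared by much of the literature (and not addressed by the paper, which proves nothing here), but they are genuine gaps in the argument as written.
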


\begin{theorem}[Existence of solutions]
Under the assumptions $(A.1)-(A.3)$ the problem \eqref{EE_2} has
at least one solution in $[0,T]$, provided that
$$\gamma_1=ml_2+\frac{c \|L_1\|{\frac{1}{p}} T^{\alpha-p}}{\Gamma(\alpha+1)}<1, \quad \mbox{where} \  c=\Big(\frac{1-p}{\alpha-p}\Big)^{1-p}.$$ \label{thm2}
\end{theorem}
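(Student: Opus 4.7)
The plan is to recast \eqref{EE_2} as a fixed point problem for the integral operator $\Phi$ on $PC(I,X)$ given by the right--hand side of \eqref{EE_3}, and split it as $\Phi=\Phi_1+\Phi_2$ where
\[
 \Phi_1 x(t)=x_0+\!\!\!\sum_{0<t_k<t}\!\!\!I_k(x(t_k^-))+\frac{1}{\Gamma(\alpha)}\int_0^t(t-s)^{\alpha-1}f_1(s,x(s))\,ds,
\]
\[
 \Phi_2 x(t)=\frac{1}{\Gamma(\alpha)}\int_0^t(t-s)^{\alpha-1}f_2(s,x(s))\,ds.
\]
Then I will verify the three ingredients required by Lemma \ref{lem1*} together with Theorem \ref{thm1}: a closed convex bounded invariant set $B_R\subset PC(I,X)$, a contraction estimate for $\Phi_1$ with constant $\gamma_1$, and compactness of $\Phi_2$.

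First I would pick a closed ball $B_R=\{x\in PC(I,X):\|x\|\le R\}$ and estimate $\|\Phi x\|$ using the bounds $|f_i|\le M_i$ and $|I_k|\le l_1$; the fractional integrals are controlled by Hölder's inequality with conjugate exponents $1/p$ and $1/(1-p)$, which applies because $p<\alpha$ guarantees $(\alpha-1)/(1-p)>-1$. The elementary computation $\bigl(\int_0^t(t-s)^{(\alpha-1)/(1-p)}ds\bigr)^{1-p}=c\,t^{\alpha-p}$ with $c=((1-p)/(\alpha-p))^{1-p}$ yields a uniform bound of the form $|x_0|+ml_1+c(\|M_1\|_{1/p}+\|M_2\|_{1/p})T^{\alpha-p}/\Gamma(\alpha+1)$, which is independent of $R$; taking $R$ at least this large gives $\Phi(B_R)\subset B_R$.

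Next I would prove the contraction estimate for $\Phi_1$. For $x,y\in B_R$ and $t\in I$,
\[
 \bigl|\Phi_1 x(t)-\Phi_1 y(t)\bigr|\le ml_2\|x-y\|+\frac{1}{\Gamma(\alpha)}\int_0^t(t-s)^{\alpha-1}L_1(s)|x(s)-y(s)|\,ds,
\]
and the same Hölder computation as above bounds the integral term by $c\|L_1\|_{1/p}T^{\alpha-p}/\Gamma(\alpha+1)\cdot\|x-y\|$. Summing gives $\|\Phi_1 x-\Phi_1 y\|\le\gamma_1\|x-y\|$ with $\gamma_1<1$ by hypothesis, so $\Phi_1$ is a strict contraction on $B_R$.

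Finally, I would show $\Phi_2$ is compact on $B_R$ via a piecewise Arzelà--Ascoli argument tailored to $PC(I,X)$. Continuity of $\Phi_2$ follows from continuity of $f_2$ together with dominated convergence (the kernel $(t-s)^{\alpha-1}$ is integrable). Uniform boundedness is clear from the bound on $|f_2|$. For equicontinuity on each subinterval $(t_k,t_{k+1}]$, for $t_k<\tau_1<\tau_2\le t_{k+1}$ I split
$
 \Phi_2 x(\tau_2)-\Phi_2 x(\tau_1)
$
into the usual two pieces $\int_0^{\tau_1}[(\tau_2-s)^{\alpha-1}-(\tau_1-s)^{\alpha-1}]f_2\,ds$ and $\int_{\tau_1}^{\tau_2}(\tau_2-s)^{\alpha-1}f_2\,ds$; Hölder and the $M_2$ bound show both vanish uniformly in $x\in B_R$ as $\tau_2-\tau_1\to 0$. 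Pointwise relative compactness of $\{\Phi_2 x(t):x\in B_R\}$ in $X$ comes from the compactness hypothesis on $f_2$, since the fractional integral of a bounded family in a (relatively) compact set is itself relatively compact (approximate by Riemann sums and pass to the limit, using the compactness of the closed convex hull in a Banach space). An Arzelà--Ascoli argument on each subinterval, combined with the finiteness of the partition, delivers compactness of $\Phi_2$ on $B_R$.

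With $\Phi_1$ a $\gamma_1$-contraction and $\Phi_2$ compact, Lemma \ref{lem1*} makes $\Phi$ condensing on the closed convex bounded set $B_R$, and Theorem \ref{thm1} (Sadovskii) furnishes a fixed point, which by the preceding lemma is a solution of \eqref{EE_2}. The main obstacle I foresee is the compactness of $\Phi_2$: one must handle the singular kernel near $s=t$ carefully in the equicontinuity estimate, and the pointwise compactness has to be derived from the pointwise compactness of $f_2$-images via a limit-of-Riemann-sums argument rather than from any smoothing effect of the kernel.
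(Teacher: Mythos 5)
Your proposal follows essentially the same route as the paper's proof: the identical splitting of the integral operator into a part $F_1$ carrying the impulses and the Lipschitz component $f_1$ and a part $F_2$ carrying $f_2$, the same H\"older estimates with exponents $1/p$ and $1/(1-p)$ yielding the invariant ball and the $\gamma_1$-contraction, and then compactness of $F_2$ combined with Lemma \ref{lem1*} and Sadovskii's theorem. If anything, your treatment of the compactness of $F_2$ is more complete than the paper's, since you explicitly supply the pointwise relative compactness of $\{F_2x(t)\}$ in $X$ (via the compactness hypothesis on $f_2$ and a Riemann-sum approximation), a step that the paper's appeal to the Arzel\`a--Ascoli theorem tacitly requires in an infinite-dimensional Banach space but does not address.
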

\begin{proof}
Let $B_\lambda$ be the closed bounded and convex subset of $PC([0,T],X),$ where $B_\lambda$ is defined
as $B_\lambda=\{x:\ \|x\|\leq \lambda\},\lambda= \  \max\{\lambda_0,\lambda_1, \cdots \lambda_m\}.$ \\
Define a map $ F : B_\lambda \rightarrow X $ such that
\begin{align*}
\ Fx(t)& =x_0+\sum_{0<t_k<t}I_k(x(t_k^-))+\frac{1}{\Gamma(\alpha)}\int_0^t(t-s)^{\alpha-1}f_{1}(s,x(s))ds
\\ & + \frac{1}{\Gamma(\alpha)}\int_0^t(t-s)^{\displaystyle{\alpha-1}}f_{2}(s,x(s))ds.
\end{align*}
Let us consider
$$F_{1}x(t)=x_0+\sum_{0<t_k<t}I_k(x(t_k^-))+\frac{1}{\Gamma(\alpha)}\int_0^t(t-s)^{\alpha-1}f_{1}(s,x(s))ds$$ and
$$F_{2}x(t)=\frac{1}{\Gamma(\alpha)}\int_0^t(t-s)^{\alpha-1}f_{2}(s,x(s))ds.$$

\emph{Step:1} $F$ is self mapping. \\
Now we show that $F(B_r)\subset B_r.$ For $t\in [0,t_1]$
\begin{eqnarray}
\ |Fx(t)|
&& \le |x_0|+\frac{1}{\Gamma(\alpha)}\int_0^{t}(t-s)^{\alpha-1}\ |f(s.x(s))| ds \nonumber \\
&& \le |x_0|+\frac{1}{\Gamma(\alpha)}\int_0^{t}(t-s)^{\alpha-1}\ |f_{1}(s,x(s))|ds\nonumber \\
&&+\frac{1}{\Gamma(\alpha)} \int_0^{t}(t-s)^{\alpha-1}\ |f_{2}(s,x(s))|ds \nonumber \\
&& \le |x_0|+\frac{1}{\Gamma(\alpha)}\int_0^{t}(t-s)^{\alpha-1}M_{1}(s)ds \nonumber \\
&&+\frac{1}{\Gamma(\alpha)}\int_0^{t}(t-s)^{\alpha-1}M_{2}(s)ds \nonumber
\\
&& \le |x_0|+\frac{1}{\Gamma(\alpha)}\Big( \int_0^{t}(t-s)^{\frac{\alpha-1}{1-p}}ds\Big)^{1-p} \Big(\int_0^{t}M_1^{\frac{1}{p}}(s)ds\Big)^p
\nonumber \\
&&+\frac{1}{\Gamma(\alpha)}\Big( \int_0^{t}(t-s)^{\frac{\alpha-1}{1-p}}ds\Big)^{1-p} \Big(\int_0^{t}M_2^{\frac{1}{p}}(s)ds\Big)^p \nonumber \\
&& \le |x_0|+\frac{c \Big(\ \|M_{1}\|_{\frac{1}{p}}+\ \|M_{2}\|_{\frac{1}{p}}\Big)}{\Gamma(\alpha)}T^{\alpha-p} = \lambda_0. \nonumber
\end{eqnarray}
For $t\in (t_1,t_2]$
\begin{eqnarray}
\ |Fx(t)|
&& \le |x_0|+|I_1(x(t_1^-))|+\frac{1}{\Gamma(\alpha)}\int_0^{t}(t-s)^{\alpha-1}\ \|f(s.x(s))\| ds \nonumber \\
&& \le |x_0|+l_1+\frac{1}{\Gamma(\alpha)}\int_0^{t}(t-s)^{\alpha-1}\ |f_{1}(s,x(s))|ds\nonumber \\
&&+\frac{1}{\Gamma(\alpha)} \int_0^{t}(t-s)^{\alpha-1}\ |f_{2}(s,x(s))|ds \nonumber \\
&& \le |x_0|+l_1+\frac{1}{\Gamma(\alpha)}\int_0^{t}(t-s)^{\alpha-1}M_{1}(s)ds\nonumber \\
&&+ \frac{1}{\Gamma(\alpha)}\int_0^{t}(t-s)^{\alpha-1}M_{2}(s)ds \nonumber \\
&& \le |x_0|+l_1+\frac{1}{\Gamma(\alpha)}\Big( \int_0^{t}(t-s)^{\frac{\alpha-1}{1-p}}ds\Big)^{1-p} \Big(\int_0^{t}M_1^{\frac{1}{p}}(s)ds\Big)^p
 \nonumber
\\
&&+\frac{1}{\Gamma(\alpha)}\Big( \int_0^{t}(t-s)^{\frac{\alpha-1}{1-p}}ds\Big)^{1-p} \Big(\int_0^{t}M_2^{\frac{1}{p}}(s)ds\Big)^p \nonumber \\
&& \le |x_0|+l_1+\frac{c \Big(\ \|M_{1}\|_{\frac{1}{p}}+\ \|M_{2}\|_{\frac{1}{p}}\Big)}{\Gamma(\alpha)}T^{\alpha-p} = \lambda_1. \nonumber
\end{eqnarray}
For $t\in (t_2,t_3]$
\begin{eqnarray}
\ |Fx(t)|
&& \le |x_0|+\sum_{k=0}^2|I_k(x(t_k^-))|+\frac{1}{\Gamma(\alpha)}\int_0^{t}(t-s)^{\alpha-1}\ |f(s.x(s))| ds \nonumber \\
&& \le |x_0|+2l_1+\frac{1}{\Gamma(\alpha)}\int_0^{t}(t-s)^{\alpha-1}\ |f_{1}(s,x(s))|ds\nonumber \\
&&+\frac{1}{\Gamma(\alpha)} \int_0^{t}(t-s)^{\alpha-1}\ |f_{2}(s,x(s))|ds \nonumber
\\
&& \le |x_0|+2l_1+\frac{1}{\Gamma(\alpha)}\int_0^{t}(t-s)^{\alpha-1}M_{1}(s)ds\nonumber \\
&&+ \frac{1}{\Gamma(\alpha)}\int_0^{t}(t-s)^{\alpha-1}M_{2}(s)ds \nonumber
\\
&& \le |x_0|+2l_1+\frac{1}{\Gamma(\alpha)}\Big( \int_0^{t}(t-s)^{\frac{\alpha-1}{1-p}}ds\Big)^{1-p} \Big(\int_0^{t}M_1^{\frac{1}{p}}(s)ds\Big)^p \nonumber \\
&&+\frac{1}{\Gamma(\alpha)}\Big( \int_0^{t}(t-s)^{\frac{\alpha-1}{1-p}}ds\Big)^{1-p} \Big(\int_0^{t}M_2^{\frac{1}{p}}(s)ds\Big)^p \nonumber \\
&& \le |x_0|+2l_1+\frac{c \Big(\ \|M_{1}\|_{\frac{1}{p}}+\ \|M_{2}\|_{\frac{1}{p}}\Big)}{\Gamma(\alpha)}T^{\alpha-p} = \lambda_2. \nonumber
\end{eqnarray}
For $t\in (t_m,T]$
\begin{eqnarray}
\ |Fx(t)|
&& \le |x_0|+\sum_{k=0}^m|I_k(x(t_k^-))|+\frac{1}{\Gamma(\alpha)}\int_0^{t}(t-s)^{\alpha-1}\ |f(s.x(s))| ds \nonumber \\
&& \le |x_0|+ml_1+\frac{1}{\Gamma(\alpha)}\int_0^{t}(t-s)^{\alpha-1}\ |f_{1}(s,x(s))|ds\nonumber \\
&&+ \frac{1}{\Gamma(\alpha)} \int_0^{t}(t-s)^{\alpha-1}\ |f_{2}(s,x(s))|ds \nonumber
\end{eqnarray}
\begin{eqnarray}
&& \le |x_0|+ml_1+\frac{1}{\Gamma(\alpha)}\int_0^{t}(t-s)^{\alpha-1}M_{1}(s)ds\nonumber \\
&&+ \frac{1}{\Gamma(\alpha)}\int_0^{t}(t-s)^{\alpha-1}M_{2}(s)ds \nonumber \\
&& \le |x_0|+ml_1+\frac{1}{\Gamma(\alpha)}\Big( \int_0^{t}(t-s)^{\frac{\alpha-1}{1-p}}ds\Big)^{1-p} \Big(\int_0^{t}M_1^{\frac{1}{p}}(s)ds\Big)^p \nonumber \\
&&+\frac{1}{\Gamma(\alpha)}\Big( \int_0^{t}(t-s)^{\frac{\alpha-1}{1-p}}ds\Big)^{1-p} \Big(\int_0^{t}M_2^{\frac{1}{p}}(s)ds\Big)^p \nonumber \\
&& \le |x_0|+ml_1+\frac{c \Big(\ \|M_{1}\|_{\frac{1}{p}}+\ \|M_{2}\|_{\frac{1}{p}}\Big)}{\Gamma(\alpha)}T^{\alpha-p} = \lambda_m \nonumber
\end{eqnarray}

and thus $F(B_\lambda)\subset B_\lambda.$ \\

\emph{Step:2} $F_1$ is continuous and $\gamma-$ contraction.

To prove the continuity of $F_1$ for $t\in [0,T]$, let us consider a sequence $x_n$
converging to $x$. Taking the norm of $F_1x_n(t)-F_1x(t)$, we have
\begin{align*}
&\ |F_1x_n(t)-F_1x(t)|  \\
& \leq  \sum_{0<t_k<t}I_1(x_n(t_k^-)-x(t_k^-))+ \frac{1}{\Gamma \alpha}\int_0^t (t-s)^{\alpha-1}L(s) |x_n(s)-x(s)|ds \\
&\leq \sum_{0<t_k<t}l_2|(x_n(t_k^-)-x(t_k^-)| \\ &+ \frac{1}{\Gamma \alpha}\Big(\int_0^t (t-s)^{\frac{\alpha-1}{1-p}}ds \Big)^{1-p}\Big(\int_0^t L_1^{\frac{1}{p}}(s)ds \Big)^p \|x_n-x\| \\
& \leq (ml_2 + \frac{c \|L_1\|_{\frac{1}{p}} T^{\alpha-p}}{\Gamma{\alpha}})\|x_n-x\|.
\end{align*}
From the above analysis we obtain
$$\|F_1x_n-F_1x\| \le (ml_2+\frac{c \|L\|_{\frac{1}{p}}T^{\alpha-p}}{\Gamma (\alpha)})\|x_n-x\|.$$

To prove that $F_1$ is $\gamma_1-$ a contraction, let us consider
for $x, y\in B_r$,
\begin{align*}
&\ |F_1x(t)-F_1y(t)|  \\
& \leq  \sum_{0<t_k<t}I_k(x(t_k^-)-y(t_k^-))+ \frac{1}{\Gamma (\alpha)}\int_0^t(t-s)^{\displaystyle{\alpha-1}}L_1(s) |x(s)-y(s)|ds \\
&\leq \sum_{0<t_k<t}l_2|(x(t_k^-)-y(t_k^-)|\\ &+ \frac{1}{\Gamma (\alpha)}\Big(\int_0^t(t-s)^{\frac{\alpha-1}{1-p}}ds\Big)^{1-p}\Big(\int_0^t L_1^{\frac{1}{p}}(s)ds\Big)^p \|x-y\| \\
& \leq (ml_2 + \frac{c \|L_1\|_{\frac{1}{p}} T^{\alpha-p}}{ \Gamma(\alpha)})\|x-y\|.
\end{align*}
Thus for $$\gamma_1=(ml_2+\frac{c \|L_1\|_{\frac{1}{p}} T^{\alpha-p}}{ \Gamma (\alpha)})<1, $$ $F_1$ is $\gamma_1-$ contraction. \\

\emph{Step:3} $F_2$ is compact. For $0\le \tau_1 \le \tau_2 \le T$, we have
\begin{align*}
&\ |F_2x(\tau_2)-F_2x(\tau_1)|  \\
&\leq   \frac{1}{\Gamma (\alpha)}\Big |\int_0^{\tau_2}(\tau_2-s)^{\alpha-1}f_2(s,x(s))ds - \int_0^{\tau_1} (\tau_1-s)^{\alpha-1}f_2(s,x(s))ds\Big |  \\
&\leq  \frac{1}{\Gamma (\alpha)}\Big |\int_0^{\tau_1}(\tau_2-s)^{\alpha-1}f_2(s,x(s))ds  +\int_{\tau_1}^{\tau_2} (\tau_2-s)^{\alpha-1}f_2(s,x(s))ds\\
&\quad -\int_0^{\tau_1} (\tau_1-s)^{\alpha-1}f_2(s,x(s))ds\Big |\\
&\leq \frac{1}{\Gamma (\alpha)}\int_0^{\tau_1}\Big((\tau_1-s)^{\alpha-1}-(\tau_2-s)^{\alpha-1}\Big) |f_2(s,x(s))|ds \\
&\quad  +\frac{1}{\Gamma (\alpha)}\int_{\tau_1}^{\tau_2}(\tau_2-s)^{\alpha-1}|f_2(s,x(s))|ds  \quad (\because (\tau-s)^{\alpha-1}  \ \mbox{is a decreasing function of} \ \tau-s.)\\
&\leq \frac{1}{\Gamma (\alpha)}\int_0^{\tau_1}\Big((\tau_1-s)^{\alpha-1}-(\tau_2-s)^{\alpha-1}\Big)M_2(s)ds \\
&\quad  +\frac{1}{\Gamma (\alpha)}\int_{\tau_1}^{\tau_2}(\tau_2-s)^{\alpha-1}M_2(s)ds
\end{align*}
\begin{align*}
&\leq \frac{1}{\Gamma (\alpha)}\Big(\Big (\int_0^{\tau_1}\Big((\tau_1-s)^{\alpha-1}-(\tau_2-s)^{\alpha-1}\Big)^\frac{1}{1-p}ds \Big)^{1-p}\Big)  \\
& \times \Big(\int_0^{\tau_1}M_2^{\frac{1}{p}}(s)ds\Big)^p \\
&\quad  +\frac{1}{\Gamma (\alpha)}\Big (\int_{\tau_1}^{\tau_2}(\tau_2-s)^{\frac{\alpha-1}{1-p}}ds \Big)^{1-p}\Big(\int_0^{\tau_1}M_2^{\frac{1}{p}}(s)ds\Big)^p \\
&\leq \frac{1}{\Gamma (\alpha)}\Big(\Big (\int_0^{\tau_1}\Big((\tau_1-s)^{\frac{\alpha-1}{1-p}}-(\tau_2-s)^{\frac{\alpha-1}{1-p}}\Big)ds \Big)^{1-p}\Big)          \quad (\because x^z-y^z\geq (x-y)^z \quad \forall x\geq y\geq 0, z>1.) \\
& \times \Big(\int_0^{\tau_1}M_2^{\frac{1}{p}}(s)ds\Big)^p \\
&\quad  +\frac{1}{\Gamma (\alpha)}\Big (\int_{\tau_1}^{\tau_2}(\tau_2-s)^{\frac{\alpha-1}{1-p}}ds \Big)^{1-p}\Big(\int_0^{\tau_1}M_2^{\frac{1}{p}}(s)ds\Big)^p \\
&\leq \frac{c}{\Gamma (\alpha)}\Big(\tau_1^{\frac{\alpha-p}{1-p}}-\tau_2^{\frac{\alpha-p}{1-p}}+(\tau_2-\tau_1)^\frac{\alpha-p}{1-p}\Big)^{1-p}         \|M_2\|_{\frac{1}{p}}+\frac{c}{\Gamma (\alpha)}(\tau_2-\tau_1)^{\alpha-p}\|M_2\|_{\frac{1}{p}} \\
&\leq \frac{c}{\Gamma (\alpha)}\Big((\tau_2-\tau_1)^\frac{\alpha-p}{1-p}\Big)^{1-p}         \|M_2\|_{\frac{1}{p}}+\frac{c}{\Gamma (\alpha)}(\tau_2-\tau_1)^{\alpha-p}\|M_2\|_{\frac{1}{p}} \quad (\because \tau_2 >\tau_1.) \\
& \leq \frac{2c \|M_2\|_{\frac{1}{p}}}{\Gamma(\alpha)}(\tau_2-\tau_1)^{\alpha-p}. 
\end{align*}
The right-hand side of the above expression does not depend on
$x$. Thus using Arzel-Ascoli theorem for equi-continuous functions
(Diethelm, Theorem D.10 \cite{diet}), we conclude that $F_2(B_r)$
is relatively compact and hence $F_2$ is completely continuous on $ I- \{t_1,t_2 \cdots t_m \}.$ In similar way it can be prove the equi-continuity of $F$ on $ t=t_k^- \ \mbox{and} \ t=t_k^+ , k=1,2, \cdots, m.$ And thus $F_2$ is compact on $[0,T].$  \\ \\
\emph{Step:4} $F$  is condensing. \\
As $F = F_1+F_2, F_1 $ is continuous, contraction and $F_2$ is compact, so using the Lemma \ref{lem1*}, $F$ is condensing map on $B_r$.\\
 And hence using the Theorem \ref{thm1} we conclude that \eqref{EE_2}  has a solution in $B_r$.
\end{proof}

\begin{theorem}
If $f$ is bounded and Lipschitz, in particular,
$|f(t,x)-f(t,y)| \le L_1^*(t)|x-y| \ \mbox{for all} \ (t,x), (t,y) \in R \ \mbox{and} \ L_1^* \in L_{1/p}([0,T],\mathbb{R}^{+})$,then the problem \eqref{EE_2} has unique solution in $B_\lambda$, provided that
$$\gamma_1^*=ml_2+\frac{c \|L_1^*\|{\frac{1}{p}} T^{\alpha-p}}{\Gamma(\alpha+1)}<1, \quad \mbox{where} \  c=\Big(\frac{1-p}{\alpha-p}\Big)^{1-p}.$$
\end{theorem}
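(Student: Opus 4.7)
The plan is to invoke the Banach contraction principle directly on the integral operator
$$Fx(t) = x_0 + \sum_{0<t_k<t} I_k(x(t_k^-)) + \frac{1}{\Gamma(\alpha)}\int_0^t (t-s)^{\alpha-1} f(s,x(s))\,ds,$$
viewed as a self-map of the closed ball $B_\lambda \subset PC([0,T],X)$. Since $f$ is itself Lipschitz here, there is no longer any need to split $f = f_1 + f_2$ and invoke Sadovskii's theorem; a single contraction estimate for $F$ will simultaneously produce existence and uniqueness, strengthening the conclusion of Theorem~\ref{thm2}.

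\textbf{Self-mapping and contraction estimate.} I would first verify $F(B_\lambda) \subset B_\lambda$ exactly as in Step~1 of Theorem~\ref{thm2}: the hypothesis that $f$ is bounded on $R$ provides a pointwise bound $|f(t,x)| \le M^{*}(t)$, and the same H\"older inequality with conjugate exponents $\tfrac{1}{1-p}$ and $\tfrac{1}{p}$ yields $\lambda_k = |x_0| + k\,l_1 + \frac{c\,\|M^{*}\|_{1/p}\,T^{\alpha-p}}{\Gamma(\alpha)}$ on each subinterval $(t_k, t_{k+1}]$. For the contraction estimate, the Lipschitz assumptions on $I_k$ and $f$ together with the same H\"older inequality give, for any $x, y \in B_\lambda$ and $t \in [0,T]$,
$$|Fx(t) - Fy(t)| \le m l_2 \|x-y\| + \frac{1}{\Gamma(\alpha)}\left(\int_0^t (t-s)^{\frac{\alpha-1}{1-p}}\,ds\right)^{1-p}\|L_1^{*}\|_{1/p}\,\|x-y\|.$$
Evaluating the inner integral produces the factor $c\,t^{\alpha-p}$ where $c = \left(\frac{1-p}{\alpha-p}\right)^{1-p}$, and taking the supremum over $t$ gives $\|Fx - Fy\| \le \gamma_1^{*}\,\|x-y\|$ with $\gamma_1^{*} < 1$ by hypothesis. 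The Banach contraction principle then yields a unique fixed point in $B_\lambda$, and by the equivalence lemma of Fecken et al.\ this is the unique solution of \eqref{EE_2}.

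\textbf{Main obstacle.} There is no genuinely new analytic difficulty: every calculation is a direct rerun of Steps~1 and~2 of Theorem~\ref{thm2}, with $L_1^{*}$ in place of $L_1$ and $f$ in place of the sum $f_1 + f_2$. The only point that requires a sentence of justification is that the $PC$ sup-norm actually controls the one-sided limits $x(t_k^-)$, so that the impulsive contribution is genuinely dominated by $m l_2 \|x-y\|$ uniformly in $t$; this is immediate from the definition of $PC([0,T],X)$, where by convention $x(t_k) = x(t_k^-)$ and hence $|x(t_k^-) - y(t_k^-)| \le \|x-y\|$.
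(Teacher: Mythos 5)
Your proposal is correct and coincides with the argument the paper intends: the paper states this uniqueness theorem without proof, and the natural proof is exactly your Banach-contraction rerun of Steps 1 and 2 of Theorem \ref{thm2}, with $L_1^*$ replacing $L_1$ and the whole of $f$ now treated as the Lipschitz part, so that Sadovskii's theorem is no longer needed. The only point worth flagging is that your H\"older estimate yields $\Gamma(\alpha)$ in the denominator of the contraction constant while the statement writes $\Gamma(\alpha+1)$; since $\Gamma(\alpha+1)=\alpha\,\Gamma(\alpha)\le\Gamma(\alpha)$ for $\alpha\in(0,1)$, the stated hypothesis $\gamma_1^*<1$ still forces your constant below $1$, so the argument goes through (the same mismatch already occurs between the statement and the proof of Theorem \ref{thm2}).
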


\section{Impulsive fractional differential equations with finite delay}
In this section, we discuss existence and uniqueness of
solutions of the following \emph{ impulsive fractional differential equations of Caputo type with finite delay} in a Banach space $X$ with norm $|.|$
\begin{align}
\begin{split}
^{C}D^{\alpha}x(t) & =
f(t,x_t), \; t \in [0,T], \; t\neq t_k,
\\ \Delta x(t)|_{t=t_k} & = I_k(x(t_k^-)), k =1,2,...,m , \; \\   x(t) &=\phi (t), t \in [-r,0].
\end{split}
\label{EE_4}
\end{align}
where $f:I \times \mathcal{C} \rightarrow X $ , $ \mathcal{C} = C([-r,0],X) $.$I_k \in C(X,X), (k=1,2,...,m)$ and $X$ is a Banach space with a norm $|.|$. For any $ x:[-r,T]\rightarrow X \ \mbox{and} \ t\in I, x_t\in \mathcal{C} $ and defined by $ x_t(s)=x(t+s), s\in [-r,0].$ Here, our tools are Banach, and Schaefer fixed point theorems. \\
Define a new Banach space $PC([-r,T],X)$
\begin{align*}
PC([-r,T],X) = \{ x:[-r,T] \rightarrow X| x\in C((t_k,t_{k+1}],X)\cup C([-r,0],X), k = 0,1,2 \cdots m,  \\   x(t_k^+) \ \mbox{and} \ x(t_k^-) \ \mbox{exist}, \ x(t_k)=x(t_k^-) \}.
\end{align*} with sup-norm $\|.\|$, defined by $\|x\|=\sup \{|x(t)|: t\in [-r,T]\}.$

\begin{definition}
A solution of fractional differential equation \eqref{EE_4} is a piecewise continuous function $x\in PC([-r,T],X)$ which satisfies \eqref{EE_4}.
\end{definition}

Consider the initial value problem \eqref{EE_4} on  $I \times \mathcal{C}$ for some fixed $T>0$, and assume that there exist $p \in (0,\alpha), \alpha \in (0,1), M_{3}, M_{4},  L_2  \in L_{1/p}([0,T],\mathbb{R}^{+})$ such that the following assumptions are satisfied
\begin{enumerate}
\item[(A.4)] $f \in C(I\times \mathcal{C},X)$
\item[(A.5)] $f$ bounded, in particular, $|f(t,\phi)| \le M_{3}(t)$ for all $(t,\phi) \in I \times \mathcal{C}$,

\item[(A.6)] $f$ is Lipschitz, in particular,  $ |f(t,\phi)-f(t,\psi)| \le L_2(t)\ \| \phi-\psi \|$ for all
$(t,\phi), (t,\psi) \in I \times \mathcal{C}.$
\end{enumerate}

\begin{lemma}(Fecken et al, Lemma 2. \cite{fecken}) The initial value problem \eqref{EE_4} is equivalent to the non-linear integral equation.
\begin{eqnarray}
\ x(t) &=& \phi(0)+ \frac{1}{\Gamma(\alpha)}\int_0^t(t-s)^{\alpha-1}f(s,x_s)ds  \; t\in [0,t_1] \nonumber
\\ &=& \phi(0)+I_1(x(t_1^-))+\frac{1}{\Gamma(\alpha)}\int_0^t(t-s)^{\alpha-1}f(s,x_s)ds , \; t\in (t_1,t_2] \nonumber
\\ &=& \phi(0)+\sum_{k=1}^2I_k(x(t_k^-))+\frac{1}{\Gamma(\alpha)}\int_0^t(t-s)^{\alpha-1}f(s,x_s)ds,  \; t\in (t_2,t_3] \nonumber
\\ &=& \phi(0)+\sum_{k=1}^mI_k(x(t_k^-))+\frac{1}{\Gamma(\alpha)}\int_0^t(t-s)^{\alpha-1}f(s,x_s)ds , \;  t\in (t_m,T]\nonumber
\\ &=& \phi(t) ,  \;  t\in [-r,0]  \label{EE_5}
\end{eqnarray}
In other words, every solution of the integral equation (\eqref{EE_5}) is
also solution of our original initial value problem \eqref{EE_4} and conversely.
\end{lemma}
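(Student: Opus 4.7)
The plan is to mirror the proof of the previous impulsive integral-representation lemma (the non-delay case), treating the delay as a harmless substitution. Once a candidate $x \in PC([-r,T],X)$ is fixed, the composition $g(s) := f(s,x_s)$ becomes a single piecewise continuous function of $s$, so every fractional-integral computation reduces to the manipulation already carried out for the non-delay lemma, with $g$ in place of $f(\cdot,x(\cdot))$.

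First I would dispose of the history interval $[-r,0]$: the identity $x(t)=\phi(t)$ appears verbatim in both \eqref{EE_4} and \eqref{EE_5}, so there is nothing to check. For the forward direction on $[0,T]$, I would argue interval by interval. On $[0,t_1]$, apply the Riemann--Liouville integral $I^{\alpha}_0$ to both sides of ${}^{C}D^{\alpha}x(t)=f(t,x_t)$ and use the identity $I^{\alpha}_0\,{}^{C}D^{\alpha}x(t)=x(t)-x(0)$, valid for $\alpha\in(0,1)$ when $x$ is absolutely continuous on $[0,t_1]$; combined with $x(0)=\phi(0)$ this yields the first branch of \eqref{EE_5}. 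For $t\in(t_k,t_{k+1}]$, I would proceed by induction on $k$: at each $t_j$ the jump condition gives $x(t_j^+)=x(t_j^-)+I_j(x(t_j^-))$, and extending the piecewise Caputo inversion across each impulse introduces exactly the term $I_j(x(t_j^-))$ while preserving the fractional integral from $0$ to $t$. Summing the contributions produces the $k$-th branch of \eqref{EE_5}.

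For the converse, assume $x$ satisfies \eqref{EE_5}. Membership in $PC([-r,T],X)$ is immediate, since the only possible discontinuities come from the step-type jumps of the sum $\sum_{0<t_k<t} I_k(x(t_k^-))$. On each open subinterval $(t_k,t_{k+1})$ both $\phi(0)$ and the jump sum are independent of $t$, so (under the piecewise interpretation of Caputo used by Feckan et al.) their Caputo derivative vanishes there, and ${}^{C}D^{\alpha}$ of the fractional integral recovers $f(t,x_t)$ by the inverse relation ${}^{C}D^{\alpha}I^{\alpha}_0 g = g$ applied to $g(s)=f(s,x_s)$. The jump relation $\Delta x(t)|_{t=t_k}=I_k(x(t_k^-))$ and the history condition $x(t)=\phi(t)$ on $[-r,0]$ are then read off directly from \eqref{EE_5}.

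The main obstacle I anticipate is the subtle behaviour of ${}^{C}D^{\alpha}$ at impulse times: if one were to read the Caputo derivative in a fully distributional sense, past jumps would produce nontrivial singular forcing of the form $(t-t_k)^{-\alpha}\Delta x(t_k)/\Gamma(1-\alpha)$, which would destroy the integral representation of \eqref{EE_5}. The proof therefore hinges on adopting the piecewise Caputo convention in which each $t_k^+$ serves as a fresh initial time for the next subinterval; with that convention (exactly as in \cite{fecken}) fixed, the remaining verifications are routine bookkeeping, with no new estimates needed beyond those established for the non-delay case, since $f(s,x_s)$ inherits the regularity properties of $g$.
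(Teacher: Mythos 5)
The paper offers no proof of this lemma: it is stated as a citation of Feckan et al.\ \cite{fecken}, so there is nothing internal to compare your argument against. In outline your derivation is the standard one that the cited reference carries out: fix $x\in PC([-r,T],X)$, set $g(s)=f(s,x_s)$ so that the delay is absorbed into a single piecewise continuous forcing term, apply $I^{\alpha}_0$ piecewise, collect one term $I_j(x(t_j^-))$ for each impulse crossed, and read the history condition $x=\phi$ on $[-r,0]$ off directly.

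One point needs correcting, and it is exactly the point on which the two competing formulations in the literature diverge. You write that the proof hinges on the convention ``in which each $t_k^+$ serves as a fresh initial time for the next subinterval.'' That convention produces a \emph{different} integral equation, namely $x(t)=x(t_k^+)+\frac{1}{\Gamma(\alpha)}\int_{t_k}^{t}(t-s)^{\alpha-1}f(s,x_s)\,ds$ on $(t_k,t_{k+1}]$, with the memory reset at each impulse --- not \eqref{EE_5}, whose integral always starts at $0$. The convention that actually yields \eqref{EE_5} (and the one adopted in \cite{fecken}) keeps the lower terminal of the Caputo derivative at $0$ and interprets $x'$ as the piecewise classical derivative, discarding the jump contributions; the semigroup property of the Riemann--Liouville integral then gives $I^{\alpha}_0\,{}^{C}D^{\alpha}x(t)=\int_0^t x'(s)\,ds = x(t)-x(0)-\sum_{0<t_k<t}\Delta x(t_k)$, from which \eqref{EE_5} follows in one line, with no induction over impulses needed. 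Your own induction step (``preserving the fractional integral from $0$ to $t$'') is consistent with this latter convention, so the computation you sketch is the right one; only the naming of the convention in your final paragraph is wrong, and under the convention you actually name, the lemma as stated would be false. With that repaired, the remaining verifications, including the converse, are indeed routine.
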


\begin{remark}
Since history part/initial condition $x(t)=\phi(t), t\in [-r,0]$ is known, so we will investigate the existence and uniqueness of solution in $I=[0,T].$
\end{remark}

\begin{theorem}[Existence and Uniqueness of solution]
Under the assumptions $(A.3)-(A.6)$ the problem \eqref{EE_4} has
an unique solution in $[0,T]$, provided that
$$\gamma_2=(ml_2+\frac{c \|L_2\|_{\frac{1}{p}} T^{\alpha-p}}{ \Gamma (\alpha)})<1, \quad \mbox{where} \  c=\Big(\frac{1-p}{\alpha-p}\Big)^{1-p}.$$ \label{thm3}
\end{theorem}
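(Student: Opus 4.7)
The plan is to apply the Banach contraction principle directly to the integral operator associated with the preceding lemma, since the theorem asserts both existence and uniqueness under a single smallness condition $\gamma_2 < 1$, which is exactly the constant that will appear in a contraction estimate. Define $\mathcal{F}: PC([-r,T],X) \to PC([-r,T],X)$ by $\mathcal{F}x(t) = \phi(t)$ for $t \in [-r,0]$ and
$$\mathcal{F}x(t) = \phi(0) + \sum_{0 < t_k < t} I_k(x(t_k^-)) + \frac{1}{\Gamma(\alpha)}\int_0^t (t-s)^{\alpha-1} f(s, x_s)\, ds$$
for $t \in [0,T]$. By the lemma preceding this theorem, fixed points of $\mathcal{F}$ coincide with solutions of \eqref{EE_4}; moreover, by the remark the values on $[-r,0]$ are prescribed by $\phi$, so I will work on the closed (hence complete) affine subspace $E_\phi = \{x \in PC([-r,T],X) : x|_{[-r,0]} = \phi\}$ equipped with the induced sup-norm.

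The central step is the contraction estimate. For $x,y \in E_\phi$ and any $t \in [0,T]$, I split $|\mathcal{F}x(t) - \mathcal{F}y(t)|$ into the impulse sum and the fractional integral. Assumption (A.3) immediately gives $\sum_{0<t_k<t} |I_k(x(t_k^-)) - I_k(y(t_k^-))| \le m l_2 \|x-y\|$. For the integral, (A.6) produces a factor $L_2(s) \|x_s - y_s\|_r$, and then Hölder's inequality with conjugate exponents $\tfrac{1}{1-p}$ and $\tfrac{1}{p}$ yields
$$\frac{1}{\Gamma(\alpha)}\int_0^t (t-s)^{\alpha-1} L_2(s)\,\|x_s-y_s\|_r\, ds \le \frac{1}{\Gamma(\alpha)}\Bigl(\tfrac{1-p}{\alpha-p}\Bigr)^{1-p} t^{\alpha-p} \|L_2\|_{1/p} \|x-y\|,$$
using $\|x_s-y_s\|_r \le \|x-y\|$ (which holds because $x$ and $y$ agree on $[-r,0]$, so the supremum defining $\|x_s-y_s\|_r$ never exceeds the global sup-norm on $[-r,T]$). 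Combining the two bounds and taking the supremum over $t \in [0,T]$ produces $\|\mathcal{F}x - \mathcal{F}y\| \le \gamma_2 \|x-y\|$. The Hölder computation is the same one already carried out in Step~2 of Theorem~\ref{thm2}, so I can quote it rather than repeating it.

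It is also necessary to check that $\mathcal{F}$ maps $E_\phi$ into itself, i.e.\ that $\mathcal{F}x$ is piecewise continuous with the correct left/right limit conventions at each $t_k$; this is routine because the impulse sum is locally constant on each $(t_k, t_{k+1}]$ and the fractional integral is continuous in $t$. With $\mathcal{F}: E_\phi \to E_\phi$ a $\gamma_2$-contraction on a complete metric space, Banach's fixed point theorem produces the unique solution. The only non-mechanical point is the interplay between the delay norm $\|\cdot\|_r$ on $\mathcal{C}$ and the global norm on $PC([-r,T],X)$; once one observes that $\|x_s - y_s\|_r \le \|x-y\|$ for functions sharing the same history, the delay causes no additional difficulty and the estimate reduces to the non-delay case already established.
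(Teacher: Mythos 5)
Your proposal is correct and follows essentially the same route as the paper: define the associated integral operator, establish the $\gamma_2$-contraction estimate via H\"older's inequality with exponents $\tfrac{1}{1-p}$ and $\tfrac{1}{p}$, and invoke Banach's fixed point theorem. Your explicit handling of the delay norm through the observation $\|x_s-y_s\|_r\le\|x-y\|$ on the affine subspace of functions with prescribed history is a point the paper leaves implicit (it simply refers back to Step~2 of Theorem~\ref{thm2}), but it does not constitute a different method.
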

\begin{proof}
 In this case we define the operator $F:PC(I,X) \rightarrow PC(I,X) $ by
  $$Fx(t) = \sum_{0<t_k<t}^mI_k(x_n(t_k^-)+ \frac{1}{\Gamma (\alpha)}\int_0^t (t-s)^{\displaystyle{\alpha-1}}f(s,x_s)ds.$$

 \emph{Step:1} To prove that $F$ is self mapping,  we need to prove that for each $x\in PC(I,X), Fx\in PC(I,X).$ \\
 We can see that the proof is similar to the proof of continuity of $F_1$ in Step:2 of the Theorem \ref{thm2} and hence we omit it.

\emph{Step:2} $F$ is continuous and $\gamma_2-$ contraction. \\
The proof of this step is also similar to the proof of continuous and $\gamma_1-$ contraction of $F_1$ in Step:2 of the Theorem \ref{thm2}. \\
Now by applying Banach's fixed point theorem, we get that the operator $F$ has an unique fixed point in $PC(I,X)$ and hence the problem \eqref{EE_4} has an unique solution in $PC([-r,T],X).$
\end{proof}

Our next result is based on Schaefer's fixed point theorem. In this case we replace assumption $A.3$ with the followings linear growth condition:
\begin{enumerate}
\item[(A.3')] $I_k$ bounded, in particular, $|I_k(x)| \le l_1^*$,
\item[(A.5')] $f$ bounded, in particular, $|f(t,\phi)| \le M_{4}(t)(1+\|\phi\|)$ for all $(t,\phi) \in I \times \mathcal{C}.$
\end{enumerate}

\begin{theorem}
Under the assumptions $A.3'$ and $A.5'$, problem \eqref{EE_4} has at least one solution. \label{thm4}
\end{theorem}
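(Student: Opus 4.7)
The plan is to apply Schaefer's fixed point theorem to the operator $F$ from the proof of Theorem \ref{thm3}, extended to $PC([-r,T],X)$ by setting $Fx(t)=\phi(t)$ for $t\in[-r,0]$. By the integral representation preceding Theorem \ref{thm3}, a fixed point of $F$ is exactly a solution of \eqref{EE_4}. Schaefer's theorem requires three ingredients: continuity of $F$, complete continuity of $F$, and boundedness of the set $\mathcal{E}=\{x\in PC([-r,T],X):x=\lambda Fx\text{ for some }\lambda\in(0,1)\}$.

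For continuity, I would recycle Step 2 of Theorem \ref{thm2}: given $x_n\to x$ in $PC([-r,T],X)$, continuity of $f$ (assumption $A.4$, still in force) and of the $I_k$, together with the dominated convergence theorem---with integrable dominant $(t-s)^{\alpha-1}M_4(s)(1+\sup_n\|x_n\|)$---yield $Fx_n\to Fx$ uniformly. For complete continuity, fix a ball $B_q=\{x:\|x\|\le q\}$. Assumptions $A.3'$ and $A.5'$ combined with the Hölder bound used throughout the paper give
\[
\|Fx\|\le|\phi(0)|+ml_1^*+\frac{c\,\|M_4\|_{1/p}T^{\alpha-p}}{\Gamma(\alpha)}(1+q),
\]
so $F(B_q)$ is uniformly bounded. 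The equicontinuity estimate from Step 3 of Theorem \ref{thm2} carries over verbatim on each subinterval $(t_k,t_{k+1}]$ with $M_2$ replaced by $s\mapsto M_4(s)(1+q)$; the impulses contribute constant jumps bounded by $l_1^*$, so one-sided limits exist at each $t_k^{\pm}$. Arzelà--Ascoli (Theorem D.10 of \cite{diet}) then gives relative compactness of $F(B_q)$ in $PC([-r,T],X)$.

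The main obstacle is the a priori bound on $\mathcal{E}$. If $x=\lambda Fx$ with $\lambda\in(0,1)$, then for $t\in I$,
\[
|x(t)|\le|\phi(0)|+ml_1^*+\frac{1}{\Gamma(\alpha)}\int_0^t(t-s)^{\alpha-1}M_4(s)(1+\|x_s\|)\,ds.
\]
Setting $u(t)=\sup\{|x(\tau)|:\tau\in[-r,t]\}$ and using $\|x_s\|\le\|\phi\|_r+u(s)$, this reduces, after applying Hölder to the constant-free term, to
\[
u(t)\le C_0+D_0\int_0^t(t-s)^{\alpha-1}M_4(s)u(s)\,ds,
\]
with $C_0,D_0$ independent of $\lambda$. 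A generalized Gronwall inequality for weakly singular kernels (of Ye--Gao--Ding type) then yields $u(T)\le K$ for a constant $K$ independent of $\lambda$, so $\mathcal{E}$ is bounded. Schaefer's theorem now produces a fixed point of $F$ in $PC([-r,T],X)$, furnishing the desired solution of \eqref{EE_4}.
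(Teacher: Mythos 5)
Your proposal follows the same overall strategy as the paper---Schaefer's fixed point theorem applied to the integral operator $F$, with continuity, complete continuity via the H\"older/equicontinuity estimates recycled from Theorem \ref{thm2}, and an a priori bound on the set of $\lambda$-fixed points---but the crucial last step is handled differently. The paper keeps the term $\frac{c(1+\|x\|)\|M_4\|_{1/p}T^{\alpha-p}}{\Gamma(\alpha)}$ on the right, moves the $\|x\|$-part to the left, and divides, obtaining $\|x\|\le \bigl(|\phi(0)|+ml_1^*+\frac{c\|M_4\|_{1/p}T^{\alpha-p}}{\Gamma(\alpha)}\bigr)\big/\bigl(1-\frac{c\|M_4\|_{1/p}T^{\alpha-p}}{\Gamma(\alpha)}\bigr)$; this silently requires $\frac{c\|M_4\|_{1/p}T^{\alpha-p}}{\Gamma(\alpha)}<1$, a smallness condition that is nowhere listed among the hypotheses of the theorem. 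Your Gronwall route (Ye--Gao--Ding, which the paper already cites as \cite{ding} in the logistic example) avoids that hidden restriction entirely, which is a genuine improvement in generality. The one point you should not gloss over is that the standard generalized Gronwall lemma is stated for $u(t)\le a(t)+g(t)\int_0^t(t-s)^{\alpha-1}u(s)\,ds$ with $g$ bounded, whereas your kernel carries the merely $L_{1/p}$-integrable weight $M_4(s)$; you need either a version of the lemma accommodating such weights (iterating the inequality and applying H\"older to each iterated kernel works, since the exponents $\frac{\alpha-1}{1-p}>-1$ keep every iterate integrable) or a preliminary reduction bounding $M_4$ on the relevant interval. With that detail supplied, your argument is correct and strictly stronger than the paper's.
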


\begin{proof}
We transform the problem into a fixed point problem. For this purpose, consider the operator $F:PC(I,X) \rightarrow PC(I,X) $ defined by $$Fx(t) = \sum_{0<t_k<t}^mI_k(x_n(t_k^-)+ \frac{1}{\Gamma (\alpha)}\int_0^t
(t-s)^{\displaystyle{\alpha-1}}f(s,x_s)ds.$$

\emph{Step:1} $F$ is continuous. Let $\{x^n\}$ be a sequence such that $x^n\rightarrow x \ \mbox{in} \ PC(I,X).$ Then for every $t\in I$, we have
\begin{align*}
&\ |Fx^n(t)-Fx(t)|  \\
& \leq  \sum_{0<t_k<t}|I_k(x^n(t_k^-))-I_k(x(t_k^-))|+ \frac{1}{\Gamma \alpha}\int_0^t(t-s)^{\alpha-1}|f(s,x_s^n)-f(s,x_s)|ds \\
& \leq \sum_{0<t_k<t}|I_k(x^n(t_k^-))-I_k(x(t_k^-))|\\ &+\|f(.,x_.^n)-f(.,x_.)\| \frac{1}{\Gamma \alpha}\Big(\int_0^t(t-s)^{\alpha-1}\Big)^{1-p}ds \\
& \leq \sum_{k=1}^m\|I_k(x^n(.))-I_k(x(.))\| + \frac{T^{\alpha}}{\Gamma{(\alpha+1)}}\|f(.,x_.^n)-f(.,x_.)\|.
\end{align*}
We can see that if $n\rightarrow \infty, Fx^n \rightarrow Fx, \ \mbox{as}\ x^n\rightarrow x$ and $I_k \ \mbox{and}\ f$ both are continuous.  Hence $F$ is continuous.

\emph{Step:2} $F$ maps bounded sets into bounded sets in $PC(I,X).$ \\
It is enough to show that for any $\delta >0$, there exists a $ l>0$ such that $x\in B_{\delta} = \{x\in PC(I,X)| \|x\|\le \delta \},$ we have $\|Fx\|\leq l.$ \\
For $t\in [0,T],$ we have
\begin{eqnarray}
\ |Fx(t)|
&& \le |\phi (0)|+\sum_{0<t_k<t}|I_k(x(t_k^-))|+\frac{1}{\Gamma(\alpha)}\int_0^{t}(t-s)^{\alpha-1}\ |f(s,x_s)\| ds \nonumber \\
&& \le |x_0|+ml_1^*+\frac{1}{\Gamma(\alpha)}\int_0^{t}(t-s)^{\alpha-1}\ |f_{1}(s,x(s))|ds\nonumber
\\
&&+\frac{1}{\Gamma(\alpha)} \int_0^{t}(t-s)^{\alpha-1}\ |f_{2}(s,x(s))|ds \nonumber \\
&& \le |x_0|+ml_1^*+\frac{1}{\Gamma(\alpha)}\int_0^{t}(t-s)^{\alpha-1}M_{4}(s)ds\nonumber \\
&&+ \frac{1}{\Gamma(\alpha)}\int_0^{t}(t-s)^{\alpha-1}M_{4}(s)ds \nonumber
\end{eqnarray}
\begin{eqnarray}
&& \le |x_0|+ml_1^*+\frac{1}{\Gamma(\alpha)}\Big( \int_0^{t}(t-s)^{\frac{\alpha-1}{1-p}}ds \Big)^{1-p} \Big(\int_0^{t}M_4^{\frac{1}{p}}(s)ds\Big)^p \nonumber \\
&&+\frac{1}{\Gamma(\alpha)}\Big( \int_0^{t}(t-s)^{\frac{\alpha-1}{1-p}}ds \Big)^{1-p} \Big(\int_0^{t}M_4^{\frac{1}{p}}(s)ds\Big)^p \nonumber \\
&& \le |x_0|+ml_1^*+\frac{c \Big(\ \|M_{4}\|_{\frac{1}{p}}\Big)}{\Gamma(\alpha)}T^{\alpha-p} \nonumber
\end{eqnarray}

\emph{Step:3} $F$ maps bounded sets into equi-continuous sets in $PC(I,X).$ \\
The proof of this step is similar to the proof of compactness of $F_2$ in Step:3 of the Theorem \eqref{thm2}.\\
As a consequence of steps 1-3 together with PC-type Arzela-Ascoli theorem (Fecken et al, Theorem 2.11 \cite{fecken}) the map $F:PC(I,X) \rightarrow PC(I,X) $ is completely continuous.

\emph{Step:4} A priori bounds. Now we prove that the set \\
$E(F)=\{x\in PC(I,X)|x=\lambda Fx, \ \mbox{for some} \ \lambda \in (0,1)\}$ is bounded. \\
We observe that for  $ t\in [0,T] \ \mbox{and} \ x\in E(F), x(t)=\lambda Fx(t).$
\begin{align*}
\ |x(t)|
& \le |Fx(t)| \\
& \le |\phi (0)|+\sum_{0<t_k<t}|I_k(x(t_k^-))|+\frac{1}{\Gamma(\alpha)}\int_0^{t}(t-s)^{\alpha-1}\ |f(s,x_s)\| ds \\
& \le |\phi (0)|+ml_1^*+\frac{1}{\Gamma(\alpha)}\int_0^{t}(t-s)^{\alpha-1}\ M_4(s)(1+\|x_s\|)ds \\
& \le |\phi (0)|+ml_1^*+\frac{1}{\Gamma(\alpha)}\int_0^{t}(t-s)^{\alpha-1}\ M_4(s)(1+\|x\|)ds \\
& \le |\phi (0)|+ml_1^*+\frac{1+\|x\|}{\Gamma(\alpha)}\Big( \int_0^{t}(t-s)^{\frac{\alpha-1}{1-p}}ds \Big)^{1-p} \Big(\int_0^{t}M_4^{\frac{1}{p}}(s)ds\Big)^p
\\
& \le |\phi (0)|+ml_1^*+\frac{c(1+\|x\|) \Big(\ \|M_4\|_{\frac{1}{p}}\Big)}{\Gamma(\alpha)}T^{\alpha-p}
\end{align*}
and hence
 $$ \|x\| \leq \frac{|\phi (0)|+ml_1^*+\frac{c \Big(\ \|M_4\|_{\frac{1}{p}}\Big)}{\Gamma(\alpha)}T^{\alpha-p} }{1-\frac{c \Big(\ \|M_4\|_{\frac{1}{p}}\Big)}{\Gamma(\alpha)}T^{\alpha-p}}.$$  This shows that $E(F)$ is bounded.\\
As a consequence of Schaefer's fixed point theorem, the problem \eqref{EE_4} has at least one solution in $PC([-r,T],X).$
\end{proof}

\begin{theorem}
If $f$ is bounded and Lipschitz, in particular,  $ |f(t,\phi)-f(t,\psi)| \le L_2^*(t)\ \| \phi-\psi \|,
 \ \mbox{for all} \ (t,\phi), (t,\psi) \in I \times \mathcal{C} \ \mbox{and} \ L_2^* \in L_{1/p}([0,T],\mathbb{R}^{+})$, then problem \eqref{EE_4} has an unique solution in $PC([-r,T],X)$, provided that $$\gamma_2^*=ml_2+\frac{c \|L_2^*\|{\frac{1}{p}} T^{\alpha-p}}{\Gamma(\alpha+1)}<1, \quad \mbox{where} \  c=\Big(\frac{1-p}{\alpha-p}\Big)^{1-p}.$$
\end{theorem}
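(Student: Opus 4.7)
The plan is to treat this as a direct application of the Banach contraction mapping principle, using the integral reformulation from the Fecken et al.\ lemma (equation \eqref{EE_5}), exactly in the spirit of Theorem \ref{thm3} but without needing to invoke the boundedness hypothesis (A.5) in the contraction estimate. Concretely, I will define the operator $F:PC([-r,T],X)\to PC([-r,T],X)$ by setting $Fx(t)=\phi(t)$ for $t\in[-r,0]$ and
\[
Fx(t)=\phi(0)+\sum_{0<t_k<t}I_k(x(t_k^-))+\frac{1}{\Gamma(\alpha)}\int_0^t(t-s)^{\alpha-1}f(s,x_s)\,ds,\quad t\in[0,T],
\]
and show that $F$ is a $\gamma_2^*$-contraction on the Banach space $PC([-r,T],X)$ with the supremum norm; Banach's fixed point theorem then yields a unique fixed point, which by the lemma is the unique solution of \eqref{EE_4}.

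First I would check that $F$ is well-defined as a map $PC([-r,T],X)\to PC([-r,T],X)$: the history part is fixed, on each subinterval $(t_k,t_{k+1}]$ the integral term is continuous by dominated convergence (using boundedness of $f$), and the left/right limits at $t_k$ exist because of the impulsive sum. This is verbatim the argument already used in Step~1 of Theorem \ref{thm3}, so I would cite that step rather than repeat it.

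For the contraction estimate, fix $x,y\in PC([-r,T],X)$. On $[-r,0]$ both agree with $\phi$, so $|Fx(t)-Fy(t)|=0$ there. For $t\in[0,T]$, using the Lipschitz property of each $I_k$ (constant $l_2$ from (A.3)) and the Lipschitz hypothesis on $f$ with weight $L_2^*$, I would write
\[
|Fx(t)-Fy(t)|\le\sum_{0<t_k<t}l_2|x(t_k^-)-y(t_k^-)|+\frac{1}{\Gamma(\alpha)}\int_0^t(t-s)^{\alpha-1}L_2^*(s)\,\|x_s-y_s\|_r\,ds,
\]
and then apply H\"older's inequality with exponents $1/(1-p)$ and $1/p$ to the integral, exactly as in Step~2 of the proof of Theorem \ref{thm2}, yielding the factor $c\,\|L_2^*\|_{1/p}T^{\alpha-p}/\Gamma(\alpha)$ times $\|x-y\|$. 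Combined with the $ml_2\|x-y\|$ from the sum, this gives $\|Fx-Fy\|\le\gamma_2^*\|x-y\|$, and since $\gamma_2^*<1$, $F$ is a strict contraction.

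The only real subtlety, and the step I would be most careful with, is the passage from $\|x_s-y_s\|_r$ to $\|x-y\|$: since $x$ and $y$ share the same history $\phi$ on $[-r,0]$, the difference $x_s-y_s$ is supported in $[\max(-s,0)-s,0]$, so $\|x_s-y_s\|_r\le\sup_{\tau\in[0,T]}|x(\tau)-y(\tau)|\le\|x-y\|$; this observation is what lets the delay estimate reduce to the non-delay one and explains why the threshold $\gamma_2^*$ has exactly the same form as $\gamma_1^*$ in the non-delay case. With this in hand, Banach's principle gives the unique fixed point of $F$ in $PC([-r,T],X)$, and the equivalence lemma identifies it with the unique solution of \eqref{EE_4}.
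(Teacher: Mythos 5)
Your proposal is correct and follows essentially the route the paper intends: the paper states this theorem without an explicit proof, leaving it as the same Banach contraction argument already carried out for Theorem \ref{thm3} (operator from the integral reformulation, Lipschitz constants $l_2$ for the $I_k$ and $L_2^*$ for $f$, H\"older with exponents $1/(1-p)$ and $1/p$), which is exactly what you do. Your explicit justification that $\|x_s-y_s\|_r\le\sup_{\tau\in[0,T]}|x(\tau)-y(\tau)|$ because $x$ and $y$ share the history $\phi$ is a worthwhile detail the paper glosses over.
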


Further, we consider the following more general Caputo fractional differential equation
\begin{align}
\begin{split}
^{C}D^{\alpha}x(t) & =
f(t,x(t),x_t), \quad t \in [0,T], \quad t\neq t_k,
\\ & \Delta x(t)|_{t=t_k} = I_k(x(t_k^-)), \quad k =1,2,...,m , \\   & x(t) =\phi (t), \quad t \in [-r,0].
\end{split}
\label{EE_6}
\end{align}
where $f:I \times X \times \mathcal{C} \rightarrow X $ and $ \mathcal{C} = C([-r,0],X),  \ I_k \in C(X,X), \ (k=1,2,...,m)$ and $X$ is a separable real Banach space with the norm $|.|$. Here, our tools will be Banach and Schaefer fixed point theorems.

\begin{definition}
A solution of fractional differential equation \eqref{EE_6} is a piecewise continuous function $x\in PC([-r,T],X)$ which satisfies \eqref{EE_6}.
\end{definition}

Consider the initial value problem \eqref{EE_6} on $I \times \mathcal{C}\times X $ for some fixed $T>0$ and assume that there exist $p \in (0,\alpha)$, $M_{5}, M_{6}, L_3 ,L_4 \in L_{1/p}([0,T],\mathbb{R}^{+})$ such that the following assumptions are satisfied:
\begin{enumerate}
\item[(A.7)] $f \in C(I\times X \times \mathcal{C} ,X)$
\item[(A.8)] $f$ bounded, in particular, $|f(t,x,\phi)| \le M_{5}(t)$ for all $(t,x,\phi) \in I \times X \times \mathcal{C} $,

\item[(A.9)] $f$ is Lipschitz, in particular, $ |f(t,x,\phi)-f(t,y,\psi)| \le L_3(t)| x-y|+L_4(t)\|\phi-\psi\|$ for all
$(t,x,\phi), (t,y,\psi) \in I\times X \times \mathcal{C}.$
\item[(A.8')] $|f(t,x,\phi)|\leq M_6(t)(1+|x|+\|\phi\|).$
\end{enumerate}

\begin{lemma}(Fecken et al, Lemma 2. \cite{fecken}) The initial value problem \eqref{EE_6} is equivalent to the following non-linear integral equation
\begin{eqnarray}
\ x(t) &=& \phi(0)+ \frac{1}{\Gamma(\alpha)}\int_0^t(t-s)^{\alpha-1}f(s,x_s,x(s))ds  \; t\in [0,t_1] \nonumber
\\ &=& \phi(0)+I_1(x(t_1^-))+\frac{1}{\Gamma(\alpha)}\int_0^t(t-s)^{\alpha-1}f(s,x_s,x(s))ds , \; t\in (t_1,t_2] \nonumber
\\ &=& \phi(0)+\sum_{k=1}^2I_k(x(t_k^-))+\frac{1}{\Gamma(\alpha)}\int_0^t(t-s)^{\alpha-1}f(s,x_s,x(s))ds,  \; t\in (t_2,t_3] \nonumber
\\ &=& \phi(0)+\sum_{k=1}^mI_k(x(t_k^-))+\frac{1}{\Gamma(\alpha)}\int_0^t(t-s)^{\alpha-1}f(s,x_s,x(s))ds ,  \;  t\in (t_m,T] \nonumber
\\ &=& \phi(t) ,  \;  t\in [-r,0]  \label{EE_7}
\end{eqnarray}
In other words, every solution of the integral equation \eqref{EE_7} is also solution of our original initial value problem \eqref{EE_6} and
conversely.
\end{lemma}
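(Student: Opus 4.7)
The plan is to establish the equivalence by induction on the impulse index, following exactly the strategy used for the corresponding lemmas already stated in the paper for \eqref{EE_2} and \eqref{EE_4}. Because the history condition $x(t)=\phi(t)$ on $[-r,0]$ appears on both sides verbatim, the whole argument reduces to $t\in[0,T]$; the only new feature is the additional explicit dependence of $f$ on $x(t)$ alongside $x_t$, but this plays no role in the bookkeeping and can be carried through as a silent parameter.

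For the forward direction, suppose $x$ solves \eqref{EE_6}. On $(0,t_1]$ the problem is a pure Caputo IVP with $x(0)=\phi(0)$, so applying $I^{\alpha}_{0}$ to both sides and using the identity $I^{\alpha}_{0}\,{}^{C}D^{\alpha}x(t)=x(t)-x(0)$ for $\alpha\in(0,1)$ yields the first line of \eqref{EE_7}. Assuming the representation has been obtained on $(t_{j-1},t_j]$, I would then use the jump condition $x(t_j^+)=x(t_j^-)+I_j(x(t_j^-))$ together with a further application of $I^{\alpha}_{0}$ on the next subinterval; this contributes exactly one new summand $I_j(x(t_j^-))$ and extends the convolution integral up to the new endpoint, giving the $j$-th line of \eqref{EE_7}. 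Iterating through $j=1,\dots,m$ produces the terminal form on $(t_m,T]$. The converse follows by piecewise differentiation: on each open subinterval the impulse sum $\sum_{k=1}^{j}I_k(x(t_k^-))$ is constant in $t$ and is annihilated by ${}^{C}D^{\alpha}$, while ${}^{C}D^{\alpha}\bigl(I^{\alpha}_{0}f(\cdot,x(\cdot),x_\cdot)\bigr)=f(t,x(t),x_t)$ recovers the fractional ODE; the jump conditions are read off by comparing left and right limits in \eqref{EE_7} at each $t_k$.

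The only genuinely delicate step is justifying that the nonlocal Caputo operator, applied to the piecewise-continuous $x$ built from \eqref{EE_7}, may indeed be inverted subinterval by subinterval with the accumulated jumps absorbed as fresh constants of integration at each impulse point. This is precisely the content of Lemma~2 of \cite{fecken} in the simpler nondelayed setting, and extending it here requires only that $s\mapsto f(s,x(s),x_s)$ be integrable against the kernel $(t-s)^{\alpha-1}/\Gamma(\alpha)$, which is guaranteed by the continuity of $f$ on $I\times X\times\mathcal{C}$ and the compactness of $[0,T]$. Once this inversion-across-impulses property is in hand, the induction becomes a mechanical check and the equivalence follows.
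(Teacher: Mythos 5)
The paper never proves this lemma: it is stated purely as a citation of Lemma~2 of \cite{fecken}, so there is no in-paper argument to compare yours against. Judged on its own, your sketch founders exactly at the point you yourself flag as ``the only genuinely delicate step,'' and your resolution of that step is circular --- you justify the inversion-across-impulses property by appealing to Lemma~2 of \cite{fecken}, which is the very statement (in its nondelayed form) that this lemma transcribes. Integrability of $s\mapsto f(s,x(s),x_s)$ against the kernel is not the issue; the issue is what the nonlocal Caputo operator does to a function with jumps.

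Concretely: in the converse direction you assert that the accumulated jump term $\sum_{k=1}^{j}I_k(x(t_k^-))$ is ``constant in $t$ and is annihilated by ${}^{C}D^{\alpha}$.'' The Caputo derivative used in this paper has fixed lower terminal $0$, namely ${}^{C}D^{\alpha}x(t)=\frac{1}{\Gamma(1-\alpha)}\int_0^t(t-s)^{-\alpha}x'(s)\,ds$. Viewed on all of $[0,t]$ the jump term is not a constant but a step function; a step function with nonzero jumps is not absolutely continuous, so its Caputo derivative either fails to exist classically or, read distributionally, equals $\sum_{k}\Delta_k\,(t-t_k)^{-\alpha}/\Gamma(1-\alpha)\neq 0$. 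For the same reason the identity $I^{\alpha}_{0}\,{}^{C}D^{\alpha}x=x-x(0)$ driving your forward induction holds only for absolutely continuous $x$ and cannot be applied on $[0,t]$ once $t>t_1$ without first specifying what ${}^{C}D^{\alpha}$ means for piecewise-continuous $x$. This is precisely the known point of contention surrounding \cite{fecken}: the formula with the plain sum $\sum_{0<t_k<t}I_k$ is one of several inequivalent solution concepts, and which integral equation is ``equivalent'' to \eqref{EE_6} depends on whether the fractional derivative restarts at each impulse or keeps memory from $0$. A complete proof must either adopt the distributional reading and verify $I^{\alpha}_{0}\bigl[(\cdot-t_k)^{-\alpha}/\Gamma(1-\alpha)\bigr](t)=1$ for $t>t_k$ (which is what makes the plain sum come out), or else redefine the solution concept subinterval by subinterval. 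A secondary gap: after an impulse the segment $x_t$ is no longer an element of $\mathcal{C}=C([-r,0],X)$, so the delay is not the harmless ``silent parameter'' you claim unless $f$ is extended to piecewise-continuous histories.
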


\begin{theorem}
Under the assumptions $A.3, A.7, A.8, A.9,$ the problem \eqref{EE_6} has an unique solution, provided that $$\gamma_3=ml_2+\frac{c(\|L_3\|_{\frac{1}{p}}+\|L_4\|_{\frac{1}{p}})T^{\alpha-p}}{\Gamma (\alpha)}<1.$$
\end{theorem}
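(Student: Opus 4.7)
The plan is to recast the IVP \eqref{EE_6} as a fixed point problem via the integral reformulation provided in the preceding lemma, and then apply the Banach contraction principle on the closed affine subspace
\[
  PC_\phi = \{x \in PC([-r,T],X) : x|_{[-r,0]} = \phi\},
\]
equipped with the sup-norm inherited from $PC([-r,T],X)$. Define the operator $F : PC_\phi \to PC_\phi$ by $Fx(t) = \phi(t)$ for $t \in [-r,0]$ and
\[
  Fx(t) = \phi(0) + \sum_{0 < t_k < t} I_k(x(t_k^-)) + \frac{1}{\Gamma(\alpha)} \int_0^t (t-s)^{\alpha-1} f(s, x(s), x_s)\, ds
\]
for $t \in I$, mirroring the proof of Theorem \ref{thm3}. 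The self-mapping property and the fact that $Fx \in PC_\phi$ follow routinely from A.7 and the continuity arguments already developed in Step 1 of Theorem \ref{thm2} (the integral piece is continuous on each interval $(t_k, t_{k+1}]$, jumps only at impulse times, and agrees with $\phi$ on the history interval).

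The heart of the argument is the contraction estimate. For $x, y \in PC_\phi$ and $t \in I$, using A.3 and A.9,
\[
  |Fx(t) - Fy(t)| \leq m l_2 \|x - y\| + \frac{1}{\Gamma(\alpha)} \int_0^t (t-s)^{\alpha-1}\bigl[L_3(s)|x(s) - y(s)| + L_4(s)\|x_s - y_s\|_r\bigr] ds.
\]
Because $x$ and $y$ agree with $\phi$ on $[-r,0]$, the translate norm satisfies $\|x_s - y_s\|_r \leq \sup_{\tau \in [0,T]} |x(\tau) - y(\tau)| = \|x-y\|$. Factoring $\|x-y\|$ out and applying Hölder's inequality with conjugate exponents $1/(1-p)$ and $1/p$ to each of the two remaining integrals gives
\[
  \int_0^t (t-s)^{\alpha-1} L_j(s)\, ds \leq \Bigl(\int_0^t (t-s)^{(\alpha-1)/(1-p)}\,ds\Bigr)^{1-p} \|L_j\|_{1/p} \leq c\, T^{\alpha-p} \|L_j\|_{1/p}
\]
for $j = 3, 4$, where $c = ((1-p)/(\alpha-p))^{1-p}$, exactly as in the contraction computation of Step 2 in Theorem \ref{thm2}. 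Combining these bounds and taking the supremum over $t \in I$ yields
\[
  \|Fx - Fy\| \leq \Bigl(m l_2 + \frac{c(\|L_3\|_{1/p} + \|L_4\|_{1/p})T^{\alpha-p}}{\Gamma(\alpha)}\Bigr)\|x - y\| = \gamma_3 \|x - y\|.
\]

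Since $\gamma_3 < 1$ by hypothesis and $PC_\phi$ is a closed (hence complete) subset of the Banach space $PC([-r,T],X)$, Banach's contraction principle supplies a unique fixed point $x \in PC_\phi$, which by the integral-equation equivalence is the unique solution of \eqref{EE_6}. The main (and essentially only) technical point is justifying the reduction of the delayed-translate norm $\|x_s - y_s\|_r$ to the global sup-norm $\|x-y\|$, which hinges on both candidate solutions sharing the same history $\phi$; the rest is a direct transcription of the Hölder--Lipschitz estimate already used for the single-argument case in Theorems \ref{thm2} and \ref{thm3}, now carrying two Lipschitz weights $L_3$ and $L_4$ additively.
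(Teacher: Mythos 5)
Your proposal is correct and follows essentially the same route as the paper: the paper's proof is a one-line reference to the Banach fixed point argument of Theorem \ref{thm3}, whose contraction estimate is in turn the H\"older--Lipschitz computation of Step 2 of Theorem \ref{thm2}, now with the two Lipschitz weights $L_3$ and $L_4$ added. Your explicit justification of the reduction $\|x_s-y_s\|_r\le\|x-y\|$ via the shared history $\phi$ is the detail the paper leaves implicit (cf.\ its Remark restricting attention to $[0,T]$), and it is handled correctly.
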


\begin{proof}
The proof is similar to the proof of Theorem \ref{thm3}.
\end{proof}
\begin{theorem}
Under the assumptions $A.3' , A.8',$ the problem \eqref{EE_6} has at least one solution in $PC([-r,T],X).$
\end{theorem}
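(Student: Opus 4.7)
The plan is to follow the Schaefer-fixed-point scheme of Theorem \ref{thm4}, adapting it to accommodate the additional dependence of $f$ on the present state $x(t)$ in addition to the history segment $x_t$. Concretely, I would introduce the operator $F:PC(I,X)\to PC(I,X)$ defined by
\[
Fx(t)=\phi(0)+\sum_{0<t_k<t}I_k(x(t_k^-))+\frac{1}{\Gamma(\alpha)}\int_0^t(t-s)^{\alpha-1}f(s,x(s),x_s)\,ds,
\]
extended by $Fx(t)=\phi(t)$ on $[-r,0]$; by the integral representation preceding the theorem, fixed points of $F$ are exactly the solutions of \eqref{EE_6}. The verification then splits into the same four steps used in the proof of Theorem \ref{thm4}.

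First, for \emph{continuity}, if $x^n\to x$ in $PC(I,X)$ then $x^n_s\to x_s$ uniformly in $s\in I$, so continuity of $f$ and of the $I_k$ together with dominated convergence (the integrand is dominated by $(t-s)^{\alpha-1}\cdot 2M_6(s)(1+\sup_n\|x^n\|+\|\phi\|_r)$, an $L^1$ envelope via H\"older) gives $\|Fx^n-Fx\|\to 0$. Second, for \emph{bounded sets to bounded sets}, whenever $\|x\|\le\delta$ the estimate $\|x_s\|\le\max(\|\phi\|_r,\delta)$ combined with $|f(s,x(s),x_s)|\le M_6(s)(1+2\delta+\|\phi\|_r)$ and H\"older's inequality with exponents $(1-p,p)$ bounds $|Fx(t)|$ uniformly, as in the $\lambda_m$-estimates of Theorem \ref{thm2}. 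Third, for \emph{equi-continuity}, I would carry out exactly the split-the-integral-at-$\tau_1$ argument from Step~3 of Theorem \ref{thm2}, using the inequality $x^z-y^z\ge(x-y)^z$ for $z>1$, to get $|Fx(\tau_2)-Fx(\tau_1)|\le \tfrac{2c\,(1+2\delta+\|\phi\|_r)\|M_6\|_{1/p}}{\Gamma(\alpha)}(\tau_2-\tau_1)^{\alpha-p}$ uniformly on $B_\delta$, with the usual separate treatment at the impulse points $t_k$. The PC-type Arzel\`a--Ascoli theorem then yields complete continuity of $F$.

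The key remaining step is the \emph{a priori bound} on $E(F)=\{x\in PC(I,X): x=\lambda Fx,\ \lambda\in(0,1)\}$. Setting $u(t)=\sup_{-r\le\sigma\le t}|x(\sigma)|$, the bounds $|x(s)|\le u(s)$ and $\|x_s\|\le u(s)$ together with assumption $(A.8')$ and H\"older give
\[
u(t)\le |\phi(0)|+\|\phi\|_r+ml_1^{*}+\frac{c\,\|M_6\|_{1/p}T^{\alpha-p}}{\Gamma(\alpha)}\bigl(1+2\|x\|\bigr).
\]
Under the implicit smallness condition $\tfrac{2c\|M_6\|_{1/p}T^{\alpha-p}}{\Gamma(\alpha)}<1$ one can solve for $\|x\|$ in closed form, exactly as done in Theorem \ref{thm4}; if that fails, I would instead invoke a fractional Gronwall inequality on the integral inequality for $u$ to close the estimate. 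Either way, $E(F)$ is bounded, and Schaefer's fixed point theorem delivers a fixed point of $F$, i.e., a solution of \eqref{EE_6} in $PC([-r,T],X)$.

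The hard part will be the a priori bound: because $f$ now depends on both $x(t)$ and $x_t$, the linear-growth estimate produces two copies of the solution norm on the right-hand side, so either the smallness assumption must be strengthened relative to the pure-delay case of Theorem \ref{thm4} or a (fractional) Gronwall argument must be used to absorb the extra term. The other three steps are straightforward adaptations of arguments already carried out in detail in Theorems \ref{thm2} and \ref{thm4}.
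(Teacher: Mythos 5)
Your proposal follows exactly the route the paper intends: the paper's own proof of this theorem is the single line ``The proof is similar to the proof of Theorem \ref{thm4},'' and you have carried out precisely that Schaefer-type adaptation (the same operator, the same four steps, the same PC-type Arzel\`a--Ascoli argument) in full detail. Your remark that the a priori bound now yields two copies of $\|x\|$ on the right-hand side, so that the implicit smallness condition of Theorem \ref{thm4} must be strengthened (or replaced by a fractional Gronwall argument), is correct and is in fact a point the paper silently glosses over in both Theorem \ref{thm4} and its extension here.
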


\begin{proof}
The proof is similar to the proof of Theorem \ref{thm4}.
\end{proof}

\begin{theorem}
 If $f$ is bounded and Lipschitz, in particular,  $|f(t,x,\phi)-f(t,y,\psi)| \le L_3^*(t)| x-y|+L_4^*(t)\|\phi-\psi\|,  \ \mbox{for all} \ (t,x,\phi), (t,y,\psi) \in I\times X \times \mathcal{C} \ \mbox{and} \ L_3^* ,  L_4^* \in L_{1/p}([0,T],\mathbb{R}^{+})$, then the problem \eqref{EE_6} has an unique solution in $PC([-r,T],X)$, provided that $$\gamma_3^*=ml_2+\frac{c \Big(\|L_3^*\|_{\frac{1}{p}} + \|L_4^*\|{\frac{1}{p}}\Big)T^{\alpha-p}}{\Gamma(\alpha+1)}<1, \quad \mbox{where} \  c=\Big(\frac{1-p}{\alpha-p}\Big)^{1-p}.$$
\end{theorem}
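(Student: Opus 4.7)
The plan is to follow the template of Theorem~\ref{thm3}: reformulate \eqref{EE_6} via the integral-equivalence lemma as a fixed-point problem for an operator $F$ on $PC([-r,T],X)$, and then show $F$ is a strict contraction with constant $\gamma_3^*<1$, so that Banach's fixed point theorem delivers the unique solution.

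Concretely, I would define $F:PC([-r,T],X)\to PC([-r,T],X)$ by $(Fx)(t)=\phi(t)$ for $t\in[-r,0]$ and
\[
(Fx)(t)=\phi(0)+\sum_{0<t_k<t}I_k(x(t_k^-))+\frac{1}{\Gamma(\alpha)}\int_0^t(t-s)^{\alpha-1}f(s,x(s),x_s)\,ds
\]
for $t\in[0,T]$. Well-definedness of $F$ on $PC([-r,T],X)$ follows from continuity of $f$ and $I_k$ together with the left/right-limit analysis already carried out in Step~3 of Theorem~\ref{thm2}.

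For the contraction estimate, take $x,y\in PC([-r,T],X)$ that agree with $\phi$ on $[-r,0]$. The impulse sum is bounded by $ml_2\|x-y\|$ using (A.3). For the integral, the Lipschitz hypothesis together with the elementary inequality $\|x_s-y_s\|_r\le\|x-y\|$ gives
\[
|f(s,x(s),x_s)-f(s,y(s),y_s)|\le\bigl(L_3^*(s)+L_4^*(s)\bigr)\|x-y\|.
\]
H\"older's inequality with conjugate exponents $1/(1-p)$ and $1/p$, applied verbatim as in Theorem~\ref{thm2}, bounds $\int_0^t(t-s)^{\alpha-1}\bigl(L_3^*(s)+L_4^*(s)\bigr)\,ds$ by $c\,t^{\alpha-p}\bigl(\|L_3^*\|_{1/p}+\|L_4^*\|_{1/p}\bigr)$, after Minkowski's inequality in $L^{1/p}$ is used to separate the two weights. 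Taking the supremum in $t\in[0,T]$ yields $\|Fx-Fy\|\le\gamma_3^*\|x-y\|$, and since $\gamma_3^*<1$ Banach's theorem produces a unique fixed point, which by the equivalence lemma is the unique solution of \eqref{EE_6} in $PC([-r,T],X)$.

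I do not anticipate any genuine obstacle: the argument is a direct transcription of Step~2 of Theorem~\ref{thm2} with the single Lipschitz weight $L_1$ replaced by $L_3^*+L_4^*$. The only mildly delicate points are (i) observing that the $[-r,0]$-supremum in $\|x_s-y_s\|_r$ is dominated by the $[-r,T]$-supremum $\|x-y\|$, which is immediate from the definition $x_s(\theta)=x(s+\theta)$, and (ii) invoking Minkowski in $L^{1/p}$ to bring the two Lipschitz weights out separately in the final constant.
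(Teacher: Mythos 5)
Your proposal is correct and is essentially the proof the paper intends: the paper states this theorem without proof, but the preceding results point to the Banach contraction argument of Theorem~\ref{thm3}, which is exactly what you carry out (with the single weight $L_2$ replaced by $L_3^*+L_4^*$ via Minkowski). The only discrepancy is harmless: your H\"older computation yields $\Gamma(\alpha)$ in the denominator rather than the paper's $\Gamma(\alpha+1)$, and since $\Gamma(\alpha+1)<\Gamma(\alpha)$ for $\alpha\in(0,1)$ the stated condition $\gamma_3^*<1$ is simply a stronger hypothesis than what your estimate actually requires.
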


\section{Examples}
\begin{example}
\textbf{Fractional impulsive  logistic equation:}\\
Consider the following class of fractional logistic equations
\begin{align}
\begin{split}
^{C}D^{\alpha}x(t) &=
x(t)\Big(a(t)-b(t)x(t))\Big), \; t \in [0,T], t\neq t_k,
\\ \Delta x(t)|_{t=t_k} & = I_k(x(t_k^-)), k=1,2,3,  ....,m \;
\\ x(0) & =x_0,
\end{split}
\label{EE_8}
\end{align}
where $a(t) \in [a_*, a^*]$ and $b(t) \in [b_*, b^*]$ with $a_*,
b_*>0.$
\end{example}
\begin{lemma}(Fecken et al, Lemma 2. \cite{fecken}) The initial value problem \eqref{EE_8} is equivalent to the non-linear integral equation
\begin{eqnarray}
\ x(t) &=& x_0+ \frac{1}{\Gamma(\alpha)}\int_0^t(t-s)^{\alpha-1}\Big(x(s)(a(s)-b(s)x(s))\Big)ds, \quad t\in [0,t_1] \nonumber \\
&=& x_0+I_1(x(t_1^-))+\frac{1}{\Gamma(\alpha)}\int_0^t(t-s)^{\alpha-1}x(s)a(s)ds \nonumber \\ &&-\frac{1}{\Gamma(\alpha)}\int_0^t(t-s)^{\alpha-1}b(s)x^2(s)ds, \quad t\in (t_1,t_2] \nonumber \\
&=& x_0+\sum_{k=1}^2I_k(x(t_k^-))+\frac{1}{\Gamma(\alpha)}\int_0^t(t-s)^{\alpha-1}x(s)a(s)ds \nonumber \\
&&-\frac{1}{\Gamma(\alpha)}\int_0^t(t-s)^{\alpha-1}b(s)x^2(s)ds),   \quad t\in (t_2,t_3] \nonumber \\
&=& x_0+ \sum_{k=1}^mI_k(x(t_k^-))+\frac{1}{\Gamma(\alpha)}\int_0^t(t-s)^{\alpha-1}x(s)a(s)ds \nonumber
\\
&&-\frac{1}{\Gamma(\alpha)}\int_0^t(t-s)^{\alpha-1}b(s)x^2(s)ds,  \quad t\in [t_m,T]. \label{EE_9}
\end{eqnarray}

In other words, every solution of the integral equation \eqref{EE_9} is also solution of our original initial value problem \eqref{EE_8} and conversely.
\end{lemma}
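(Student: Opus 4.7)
The plan is to recognise that \eqref{EE_8} is the special case of \eqref{EE_2} obtained by choosing $f(t,x)=x\bigl(a(t)-b(t)x\bigr)$, so the equivalence with \eqref{EE_9} follows by invoking the general Fecken et al.\ lemma (the equivalence statement for \eqref{EE_2}) and then splitting the integrand by linearity. Since $a,b$ are bounded and continuous (or at least $L^{1/p}$) and $x$ will live in $PC([0,T],X)\subset L^\infty$, the composition $s\mapsto x(s)(a(s)-b(s)x(s))$ is at least in $L^1_{\mathrm{loc}}$, which is enough to make the Riemann--Liouville integral $I^\alpha$ well defined on each subinterval $(t_k,t_{k+1}]$.

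First I would apply the Riemann--Liouville operator $I^\alpha$ to both sides of ${}^{C}D^\alpha x(t)=x(t)(a(t)-b(t)x(t))$ on the interval $[0,t_1]$. Using the identity $I^\alpha\,{}^{C}D^\alpha y = y - y(0)$, valid for $\alpha\in(0,1)$ and $y$ absolutely continuous, together with the initial condition $x(0)=x_0$, this yields
\begin{equation*}
x(t)=x_0+\frac{1}{\Gamma(\alpha)}\int_0^t(t-s)^{\alpha-1}x(s)\bigl(a(s)-b(s)x(s)\bigr)ds,\qquad t\in[0,t_1],
\end{equation*}
which, by linearity of the integral, is exactly the first branch of \eqref{EE_9} once the single integral is written as a sum of the $a(s)x(s)$ and $-b(s)x^2(s)$ parts.

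Next I would proceed inductively across the impulse points. Suppose the integral representation has been established for $t\in(t_{j-1},t_j]$; at $t=t_j$ the jump relation $\Delta x(t)|_{t=t_j}=I_j(x(t_j^-))$ gives $x(t_j^+)=x(t_j^-)+I_j(x(t_j^-))$. Restarting the Caputo equation on $(t_j,t_{j+1}]$ with this new initial value and applying $I^\alpha$ again produces an extra additive term $I_j(x(t_j^-))$, which telescopes with the previous sum to yield $\sum_{k=1}^{j}I_k(x(t_k^-))$. Because the kernel $(t-s)^{\alpha-1}$ integrated from $0$ to $t$ already covers the full past (the Caputo derivative being defined with lower limit $0$ throughout), the integral in each branch of \eqref{EE_9} is still taken over $[0,t]$. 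Iterating up to $j=m$ gives the last line of \eqref{EE_9}. The converse direction is routine: differentiating the integral representation via ${}^{C}D^\alpha = I^{1-\alpha}\frac{d}{dt}$, or equivalently applying $^{C}D^\alpha$ directly, recovers the differential equation on each open subinterval, while taking one-sided limits at $t_k$ recovers the jump conditions and setting $t=0$ recovers the initial value.

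The only mildly delicate point will be the inductive step across the impulses, where one must be careful that in the integral representation the lower limit remains $0$ (not $t_k$) and that the accumulated impulses appear as a discrete sum outside the integral. This is the same mechanism already handled in the general Fecken lemma for \eqref{EE_2}, so in principle the proof reduces to citing that lemma and verifying that $f(t,x)=x(a(t)-b(t)x)$ satisfies its hypotheses on bounded subsets of $X=\mathbb{R}$; no new analytical difficulty arises.
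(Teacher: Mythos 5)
Your proposal is correct and matches the paper's (implicit) argument: the paper offers no proof of this lemma at all, simply citing Fe\v{c}kan et al.'s Lemma 2, and your reduction of \eqref{EE_8} to the general equivalence lemma for \eqref{EE_2} with $f(t,x)=x\bigl(a(t)-b(t)x\bigr)$, followed by splitting the integral by linearity into the $a(s)x(s)$ and $-b(s)x^{2}(s)$ parts, is exactly that citation made explicit. Your supplementary sketch of the underlying derivation is also consistent with the paper's conventions (Caputo derivative with lower limit $0$ throughout, impulses accumulating as a discrete sum outside the integral), though the phrase ``restarting the equation at $t_j$'' should be understood as applying $I^{\alpha}_0$ over all of $[0,t]$ and accounting for the jumps of $x$ in $\int_0^t x'(s)\,ds$, rather than as moving the lower limit of the kernel to $t_j$ --- the latter reading would produce a different (and incompatible) integral representation.
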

We can easily see that for the problem \eqref{EE_8} our functions are
$f_1(t,x)=a(t)x$ and  $f_2(t,x)=-b(t)x^2.$
It is not difficult to deduce that
$$|f_1(t,x)| \le a^* \|x\|+ml_1 \ \mbox{and}\ |f_1(t,x)-f_1(t,y)\ \le a^*\|x-y\|.$$ Also $|f_2(t,x)| \le b^* \|x\|^2.$
From the integral representation of problem (\ref{EE_8}) we get
 $$ \ |x(t)| \leq |x_0|+ ml_1+ \frac{1}{\Gamma(\alpha)}\int_0^t(t-s)^{\alpha-1}(a^*+b^*)\|x\|ds.$$ \\
Using Gronwall's inequality (Diethelm, Lemma 6.19 \cite{diet, ding}), we get  \\
\begin{align*}
 & \ |x(t)|\leq (|x_0|+ ml_1) \exp\Big(\frac{1}{\Gamma(\alpha)}\int_0^t(t-s)^{\alpha-1}(a^*+b^*)ds\Big) \\ \\
 & \leq (|x_0|+ ml_1) \exp\Big(\frac{a^*+ b^*}{\Gamma{(\alpha+1)}}\Big).
\end{align*}
Thus $x$ is bounded which implies that all the assumptions of
Theorem \ref{thm2} and hence there exists a solution of the
problem \eqref{EE_8}.

We give some more examples which are inspired by \cite{fecken}.
\begin{example}
Consider the following Caputo impulsive delay fractional differential equations
\begin{align}
\begin{split}
^{C}D^{\alpha}x(t) &=
\frac{e^{-\nu t}\|x_t\|}{(1+e^t)(1+\|x_t\|)}, \; t \in [0,1], \ t\neq t_1, \ \nu >0
\\ & \Delta x(t)|_{t=t_1} =\frac{1}{2}, \;
\\ & x(t) = \phi(t), \quad t\in [-r,0],  \\  & \phi(0) =0,
\end{split}
\label{EE_10}
\end{align}
\end{example}
Set $C_1=C([0,1],\mathbb{R^+}), f(t,\phi)= \frac{e^{-\nu t}\phi}{(1+e^t)(1+\phi)}, \; (t,\phi)\in [0,1]\times C_1.$ \\
Let $\phi_1, \phi_2 \in C_1$ and $t\in [0,1].$ Then, we have
\begin{align*}
\ |f(t,\phi_1)-f(t,\phi_2)| & = \frac{e^{-\nu t}}{1+e^t}|\frac{\phi_1}{1+\phi_1}-\frac{\phi_2}{1+\phi_2}|  = \frac{e^{-\nu t}|\phi_1-\phi_2|}{(1+e^t)(1+\phi_1)(1+\phi_2)} \\ & \leq  \frac{e^{-\nu t}|\phi_1-\phi_2|}{(1+e^t)}  \leq L^*(t)|\phi_1-\phi_2|, \ \mbox{where} \ L^*(t)=\frac{e^{-\nu t}}{2}.
\end{align*}
Again, for all $\phi \in C_1$ and each $t\in [0,T],$
\begin{align*}
\ |f(t,\phi)| & = \frac{e^{-\nu t}}{1+e^t}|\frac{\phi}{1+\phi}|  \leq  \frac{e^{-\nu t}}{(1+e^t)}  < m_1(t), \ \mbox{where} \ m_1(t)= \frac{e^{-\nu t}}{2}
\end{align*}
 For $t\in [0,1]$ and some $p\in (0,\alpha),L^*(t)= m_1(t) = \frac{e^{-\nu t}}{2}\in L_{\frac{1}{p}}([0,1],\mathbb{R^+})$  with $M_1^*=\|\frac{e^{-\nu t}}{10}\|_{\frac{1}{p}},$ we can arrive at the inequality  $\frac{1}{4}+\frac{cM_1^*}{\Gamma(\alpha)}<1.$ We can see that all the assumptions of the Theorem \ref{thm3} are satisfied and hence the problem \eqref{EE_10} has an unique solution in $[0,1]$.
\begin{example}
Consider the following Caputo impulsive delay fractional differential equation
\begin{align}
\begin{split}
^{C}D^{\alpha}x(t) &=
\frac{\|x_t\|}{(1+e^t)(1+\|x_t\|)}, \; t \in [0,T], \ t\neq t_1, \ \nu >0
\\ & \Delta x(t)|_{t=t_1} = \frac{1}{2},\;
\\ & x(t) = \phi(t), \quad t\in [-r,0],   \\ & \phi(0) =0,
\end{split}
\label{EE_11}
\end{align}
\end{example}
Set $C_2=C([0,1],\mathbb{R^+}), f(t,\phi)= \frac{\phi}{(1+e^t)(1+\phi)}, \; (t,\phi)\in [0,1]\times C_2.$ \\
Again, for all $\phi \in C_2$ and each $t\in [0,T],$
\begin{align*}
\ |f(t,\phi)| & = \frac{e^{-t}}{1+e^t}|\frac{\phi}{1+\phi}| \leq  \frac{e^{-t}}{(1+e^t)}  <m_2(t)(1+\|\phi\|), \ \mbox{where} \ m_2(t)= \frac{e^{-t}}{4}
\end{align*}
 For $t\in [0,1]$ and some $p\in (0,\alpha), m_2(t) =  \frac{e^{-t}}{4} \in L_{\frac{1}{p}}([0,1],\mathbb{R^+})$ with $M_2^*=\|\frac{e^{-t}}{4}\|_{\frac{1}{p}},$ we can arrive at the inequality  $\frac{1}{4}+\frac{cM}{\Gamma(\alpha)}<1.$ We can see that all the assumptions of the Theorem \ref{thm4} are satisfied and hence the problem \eqref{EE_11} has a solution in $[0,1]$.

\end{document}